\newtheorem{theorem}{Theorem}[section]
\newtheorem{lemma}{Lemma}[section]
\newtheorem{remark}{Remark}[section]
\newtheorem{definition}{Definition}[section]
\def\ds{\displaystyle}
\def\Adm{\mathbb{A}}
\def\bbR{\mathbb{R}}
\def\E{\mathbf{E}}
\def\H{\mathbf{H}}
\def\n{\mathbf{n}}
\def\u{\mathbf{u}}
\def\v{\mathbf{v}}
\def\x{\mathbf{x}}
\def\caC{\mathcal{C}}
\def\caD{\mathcal{D}}
\def\caG{\mathcal{G}}
\def\caK{\mathcal{K}}
\def\caT{\mathcal{T}}
\def\K{\text{K}}
\def\T{\text{T}}
\def\rot{{\nabla\times}}
\def\Einc{\mathbf{E}_\text{inc}}
\def\Hinc{\mathbf{H}_\text{inc}}
\def\Esc{\mathbf{E}_\text{sc}}
\def\Hsc{\mathbf{H}_\text{sc}}
\def\wt{\widetilde}
\newcommand\iref[1]{(\ref{#1})}
\DeclareMathOperator\Id{Id}
\begin{document}

\title[A simple preconditioned DDM for electromagnetic scattering problems]
{A simple preconditioned domain decomposition method for electromagnetic scattering problems}
\author{F.~Alouges}
\address[F.~Alouges]{CMAP, Ecole Polytechnique, Palaiseau, France}
\email[F.~Alouges]{alouges@cmapx.polytechnique.fr}
\author{J.~Bourguignon-Mirebeau}
\address[J.~Bourguignon-Mirebeau]{Laboratoire de Math\'ematiques, Universit\'e Paris-Sud XI, Orsay, France}
\email[J.~Bourguignon-Mirebeau]{jennifer.bourguignon@math.u-psud.fr}
\author{D.~P.~Levadoux}
\address[D.~P.~Levadoux]{DEMR-SFM, ONERA, Palaiseau, France}
\email[D.~P.~Levadoux]{david.levadoux@onera.fr}
\date{\today}
\maketitle

\begin{abstract}
We present a domain decomposition method (DDM)
devoted to the iterative solution of time-harmonic
electromagnetic scattering problems,
involving large and resonant cavities.
This DDM uses the electric field integral equation (EFIE)
for the solution of Maxwell problems
in both interior and exterior subdomains,
and we propose a simple preconditioner for the global method,
based on the single layer operator
restricted to the 
fictitious interface between the two subdomains. \\
\end{abstract}

\noindent
\textit{Mathematics subject classification :}
65F08, 65N38, 65R20.

\noindent
\textit{Key words :}
Electromagnetism, integral equations methods,
domain decomposition methods, preconditioning, cavities.

\section{Introduction}

Solving scattering Maxwell problems
in harmonic regime can be achieved with various methods,
among which integral equations (which lead to the so-called boundary element methods) have proven their efficiency.
Their main advantage is that they allow to replace a problem posed on the whole space
by an equation posed on the surface of the scattering obstacle,
reducing a three-dimensional problem to a bi-dimensional one.
With the development of such methods,
several difficulties arose successively :
\begin{itemize}
\item These formulations lead classically to linear systems involving dense matrices
(in contrast with finite element methods, for instance). Several methods among 
which the most famous is probably the FMM (Fast Multipole Method) \cite{Rokhlin93}, \cite{SimonThesis} have been 
used to circumvent this difficulty.
\item There might exist irregular frequencies for which the problem is ill posed \cite{Nedelec01}.
This is typically the case for the so-called EFIE and MFIE formulations. Other types
of formulations (e.g. the CFIE) are instead well-posed at any frequency \cite{BurtonMiller71}.
\item The desire to deal with high frequency problems
imposes to use fine discretizations of the equations
and consequently to solve large linear systems.
This prevents the use of direct solvers, and one usually employs iterative methods.
On the one hand, this needs a fast matrix-vector multiplication (which is often realized  through the FMM),
while on the other hand iterative methods become sensible to the condition number of the system.
It has been shown that the underlying systems arising from integral equations are usually
badly conditioned and there is a need to develop preconditioning strategies
in order to accelerate the convergence of the iterative solver
\cite{ChristiansenNedelec02}, \cite{LevadouxM2AS07}, \cite{Steinbach98}, \cite{LeanTran97}. For instance,
the so-called GCSIE methodology has been developed which turns out to be particularly
efficient in the case where the object has no cavities and no singularities,
by building intrinsically well conditioned integral equations
\cite{Alouges07}, \cite{Alouges05}, \cite{Darbas2005}, \cite{LevadouxMillotPernet}, \cite{Pernet}.
\end{itemize}

Nevertheless, when facing realistic problems, one has to treat large objects with
complex geometries and new problems are encountered.
In this paper, we address the important issue of resonant cavities,
motivating the use of a domain decomposition method.
Indeed, this is a particularly crucial problem in stealth applications as one needs to take 
into account the existence of large and resonant cavities, such as air intakes, or cockpits  for aircrafts.
In classical numerical computations of radar cross sections, these cavities are usually closed
in order to avoid the poor convergence of the algorithms \cite{Alouges07} giving unrealistic results.

In this paper, we explore a new strategy to deal with this problem. Indeed, we intend to use
a domain decomposition method (DDM) in order to split the exterior domain into
two subdomains one of which being the cavity. The aim is to decouple the exterior
problem (without any cavity) from the problem with boundaries (the cavity itself). 
This introduces an artificial interface $\Sigma$ between these subdomains
and a new coupling problem posed on $\Sigma$ (\textsc{Fig.} \ref{picture1}).
For simplicity, we here use on each subdomain the EFIE to solve the corresponding
subproblems and to couple the solutions on $\Sigma$.
This naive DDM algorithm turns out to converge badly. In a latter part we propose a 
preconditioning technique to accelerate significantly the solution of the DDM.

Historically, the first domain decomposition methods
for Helmholtz or Maxwell problems
were applied using a finite element method (FEM) in the interior bounded subdomains
and a boundary element method (BEM) in the exterior unbounded domain.
For instance, Hiptmair considers FEM-BEM methods,
first applied to acoustic problems \cite{HiptmairHelmholtz} and then to electromagnetic problems \cite{HiptmairMaxwell2}.
For Helmholtz transmission problems, domain decomposition methods
have been used by Balin, Bendali and Collino \cite{BalinBendaliCollino}
to specifically treat the case of an electrically deep cavity,
and an integral preconditioner using the Calder{\'o}n formulas
has been developed by Antoine and Boubendir \cite{AntoineBoubendir2}.
For Maxwell transmission problems, Balin, Bendali and Millot \cite{BalinBendaliMillot} on the one hand,
Collino and Millot on the other hand \cite{CollinoMillot},  \cite{Collino00}
propose algebraic preconditioners which use overlapping or nonoverlapping domain decomposition techniques.
In iterative domain decomposition techniques, which are split into overlapping and nonoverlapping DDM,
the subdomains classically exchange Dirichlet or Neumann data.
A substantial improvement using absorbing boundary conditions is made by Despr\`es \cite{DespresWaves}, \cite{DespresThesis}.
The Schwarz method, originally used with Dirichlet or Neumann conditions for overlapping domains,
is then adapted by Gander, Halpern and Nataf \cite{Gander8},
to nonoverlapping subdomains with more general conditions, of Robin type.
The resulting algorithm converges with a high convergence rate for the wave equation in dimension 1.
Gander, Halpern and Magoul\`es \cite{Gander14} optimize the method by taking more general conditions, for the Helmholtz problem in dimension~2.
Eventually, Dolean, Gander and Gerardo-Giorda \cite{Gander17} adapt it to obtain
a Schwarz optimized method for the harmonic Maxwell problem in dimension~3.

\newpage

We present here a nonoverlapping domain decomposition method.
This DDM couples the subdomains through the help of an operator, instead of transmitting at each iteration
the appropriate conditions from one subdomain to another.
We use only integral equations to solve the boundary value problems in the subdomains.
In particular, the interior problem is treated with the help of an integral equation,
instead of a more classical finite element method.
We are not aware of the use of such techniques for solving Maxwell equations with the DDM in the context of integral equations  in the literature.

\begin{figure}[h!]
\begin{center}
\begin{minipage}{0.45\linewidth}
\includegraphics[scale=0.28]{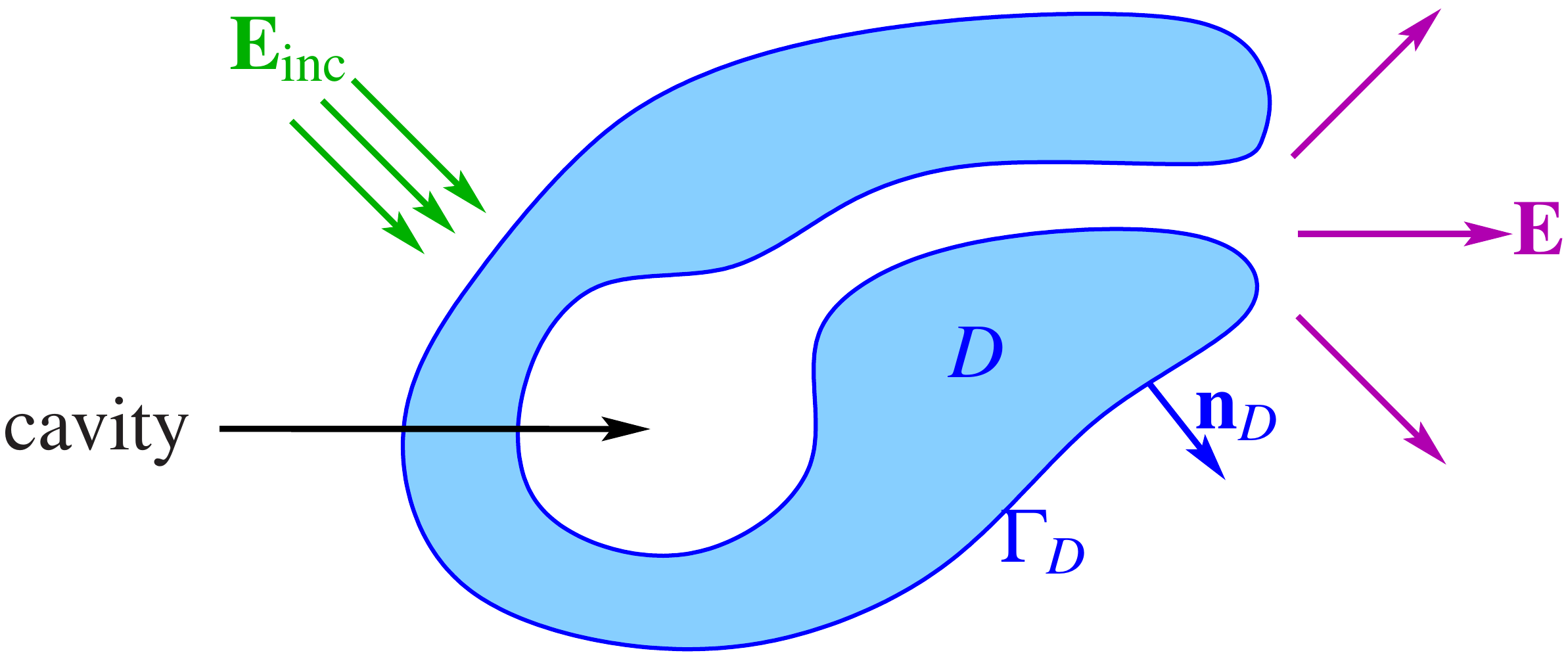}
\end{minipage}
\hfill
\begin{minipage}{0.45\linewidth}
\includegraphics[scale=0.28]{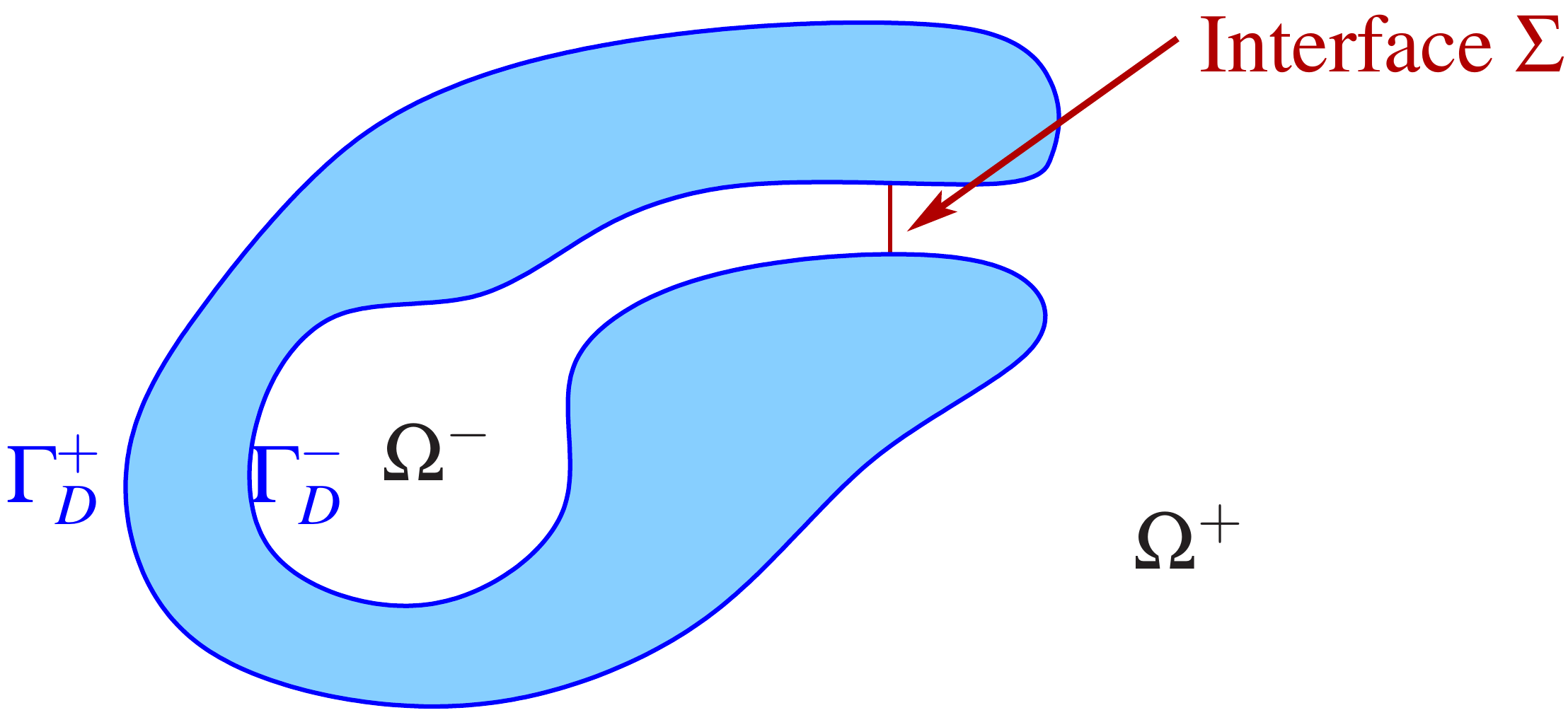}
\end{minipage}
\caption{
The scattering problem (left) and the decomposition of domain $\Omega$ (right).}
\label{picture1}
\end{center}
\end{figure}

The paper is organized as follows. The scattering problem is first described and in a second part, 
we present the domain decomposition method
and the condensed problem on the interface.
The third part gives a quick overview on classical integral equations methods,
and especially of the one we use here,
namely the EFIE (Electric Field Integral Equation).
The Dirichlet-to-Neumann map of the interface $\Sigma$ plays a very important role
that we describe carefully in the fourth part and this enables us to present a simple 
analytic preconditioner for the employed DDM, in the fifth part.
A validation of the method is presented using pseudo-differential calculus.
Eventually, the sixth part gives some numerical results. Substantial improvements are 
shown validating the approach.

\section{The boundary value problem : assumptions and notation}

We consider a compact set $D$
with a smooth boundary $\Gamma_D$.
We are particularly interested in the case where
the set $D$ contains a large cavity, as illustrated on \textsc{Fig.} \ref{picture1}.
We assume that the open exterior domain
$\Omega = \bbR^3 \setminus D$ is connected. \\

Our purpose is to solve the harmonic Maxwell problem
when $D$ stands for a scattering metallic object
\cite{ColtonKress83}, \cite{Nedelec01}.
Waves propagate with constant wave number $k$ in the exterior unbounded domain $\Omega$.
The electric field $\E$ is a vector-valued function which satisfies the harmonic Maxwell equation 
\begin{equation}
\label{Maxwell}
\rot \left( \rot \E \right) - k^2 \E = 0 \quad \text{ in } \Omega
\end{equation}
while the related magnetic field is given by 
\begin{equation}
\H = \frac{1}{ik} \rot \E.
\label{E->H}
\end{equation}
An electric field is said to be radiating if it satisfies the well-known
Sommerfeld radiation condition
\begin{equation}
\label{Sommerfeld}
\lim\limits_{\| \x \| \to \infty} \Big( \x \times \H + \| \x \| \E \Big) =  0.
\end{equation}

\newpage
Let $\E_{\text{inc}}$ be an incident electric field,
the electric field $\E$ scattered by the obstacle $D$
is the electric radiating field 
satisfying the boundary condition $\gamma_D \E = - \gamma_D \E_{\text{inc}}$ on $\Gamma_D$, 
where $\gamma_D = \n_D \times$ is the metallic trace on $\Gamma_D$, $\n_D$ being the 
unit normal outward to $D$.
In other words, the field $\E$ is solution of the following problem

\begin{equation}
\label{PECproblem}
\left\{ \begin{array}{rcll}
\ds \rot \left( \rot \E \right) - k^2 \E &=& 0 &\mbox{ in }\Omega,\\
\ds \gamma_D \E &=& - \gamma_D \E_{\text{inc}} &\mbox{ on }\Gamma_D,\\
\ds \lim\limits_{\| \x \| \to \infty} \Big( \x \times \left( \rot \E \right) + ik \| \x \| \E \Big) & = & 0, &
\end{array} \right.
\end{equation}
usually named as the perfect electric conductor (PEC) problem.

\section{Notation for the domain decomposition method}
\label{SectionDDM}

Domain decomposition methods rely on splitting the computational domain into several subdomains.
We present hereafter the application of the method for our case when $\Omega$ is decomposed into two subdomains
$\Omega^+$ and $\Omega^-$. Namely, we introduce an artificial boundary surface $\Sigma$, which splits $\Omega$
into an interior bounded domain $\Omega^-$ and an exterior unbounded domain $\Omega^+$
(\textsc{Fig.} \ref{picture1}).
We denote by $\Gamma_D^\pm = \Gamma_D \cap \partial \Omega^\pm$,
in such a way that the boundary of $\Omega^\pm$ is $\Gamma_D^\pm \cup \Sigma$.
We call $\mathbf{n}^\pm$ the inward unit normal to $\Omega^\pm$.
The notation $\sigma_0^\pm = \mathbf{n}^\pm \times$ and
$\sigma_1^\pm = \frac{1}{ik} \mathbf{n}^\pm \times \left( \nabla \times \right)$
stand for the classical electric and magnetic traces on $\Sigma$. \\

We introduce the short-cut field $\Esc$,
which is the radiating electric field defined on $\Omega^+$,
having a tangential trace on $\Gamma_D^+ \cup \Sigma$,
and such that $ \gamma_D \Esc=-\gamma_D \Einc $ on $\Gamma_D^+$
and $\sigma_0^+ \Esc = - \sigma_0^+ \Einc$ on $\Gamma$ (\textsc{Fig.} \ref{picture2}).
In other words, $\Esc$ is the field
scattered by the object when interface $\Sigma$ becomes metallic. \\

\begin{figure}[h!] \centering
\includegraphics[scale=0.28]{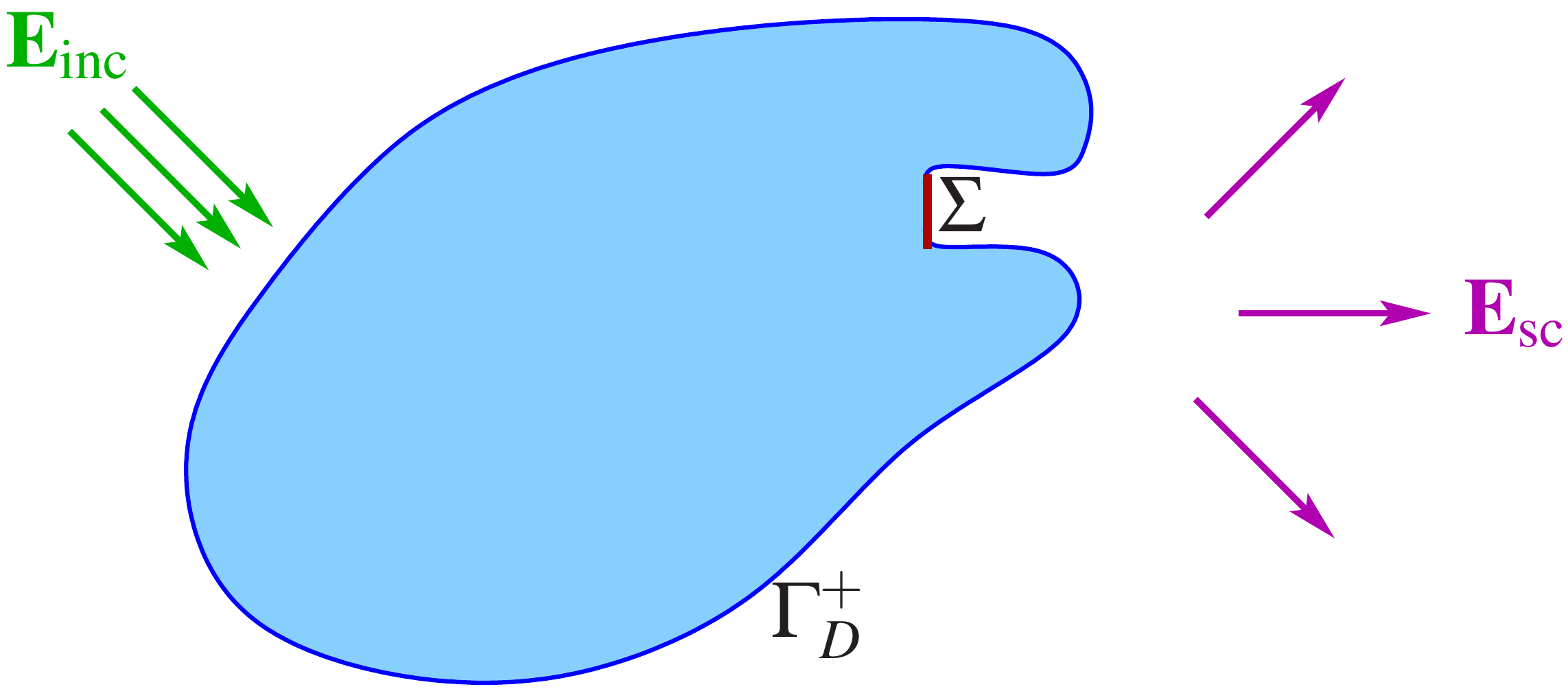}
\caption{
Short-cut field $\E_\text{sc}$. }
\label{picture2}
\end{figure}

We denote by $\Einc^\pm$ the restriction of the incident field $\Einc$ to the domain $\Omega^\pm$.
We look for the scattered field solution of the PEC problem \iref{PECproblem}
under the form $\E^- - \Einc^-$ inside $\Omega^-$
and $\E^+ + \Esc$ inside $\Omega^+$,
where $\E^+$ and $\E^-$ respectively belong to spaces of \textit{admissible} waves $W^+$ and $W^-$. \\

More precisely, the space $W^-$ is the set of all electric fields $\E^-$ 
which are defined on $\Omega^-$,
have a tangential trace on $\Gamma_D^- \cup \Sigma$,
and satisfy $\gamma_D \E^- = 0$ on $\Gamma_D^-$.
Correspondingly, $W^+$ is the space of all \emph{radiating} electric fields $\E^+$
which are defined on $\Omega^+$,
have a tangential trace on $\Gamma_D^+ \cup \Sigma$,
and satisfy $\gamma_D \E^+ = 0$ on $\Gamma_D^+$.
Since the subdomain $\Omega^-$ is bounded,
the radiation condition is not required for the fields in $W^-$. \\

The total electric fields therefore have the expression
\begin{equation*}
\E^\text{tot} = \left\{
\begin{array}{ll}
\E^- & \text{ in } \Omega^-,\\
\E^+ + \Einc^+ + \Esc & \text{ in } \Omega^+,
\end{array}
\right.
\end{equation*}
whereas the total magnetic fields (computed from the electric fields with \iref{E->H}) have the expression
\begin{equation*}
\H^\text{tot} = \left\{
\begin{array}{ll}
\H^- & \text{ in } \Omega^-,\\
\H^+ + \Hinc^+ + \Hsc & \text{ in } \Omega^+.
\end{array}
\right.
\end{equation*}

\begin{figure}[h!] \centering
\includegraphics[scale=0.28]{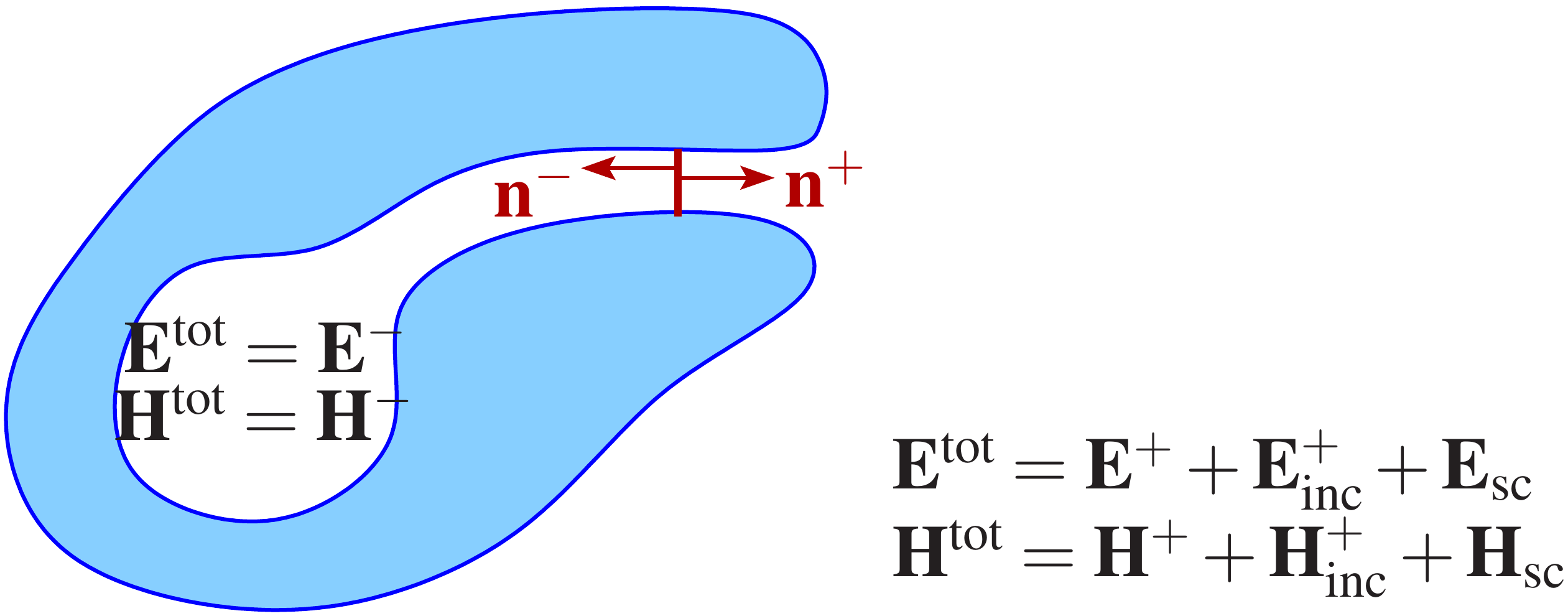}
\caption{
Total fields $\E^\text{tot}$ and $\H^\text{tot}$. }
\label{picture3}
\end{figure}

Since $\Sigma$ is an artificial boundary, the total fields $\E^\text{tot}$ and $\H^\text{tot}$
are continuous across $\Sigma$, and the problem~\iref{PECproblem} becomes the transmission problem
\begin{equation}
\label{Transmission1}
\text{Find } \left( \E^+, \E^- \right) \in (W^+,W^-), \quad
\begin{cases}
\n^+ \times \E^- = \n^+ \times \Big( \E^+ + \Einc^+ + \Esc \Big), \\
\n^+ \times \H^- = \n^+ \times \Big( \H^+ + \Hinc^+ + \Hsc \Big),
\end{cases}
\text{ on } \Sigma.
\end{equation}

Notice that by construction the short-cut field $\Esc$ verifies 
$\n^+ \times \Einc^+ + \n^+ \times \Esc = 0$ on $\Sigma$, and thus,
defining the right hand side current
\begin{equation*}
\u_\text{rhs} = \n^+ \times \Hinc^+ + \n^+ \times \Hsc \quad \text{ on } \Sigma,
\end{equation*}
equation \iref{Transmission1} rewrites as
\begin{equation}
\label{Transmission2}
\begin{cases}
\E_\text{tan}^- = \E_\text{tan}^+, \\
- \n^- \times \H^- = \n^+ \times \H^+ + \u_\text{rhs},
\end{cases}
\text{ on } \Sigma.
\end{equation}

The preceding system, which expresses the DDM,
will be solved using integral equations inside each subdomain.
We recall these integral equations methods in the next section.

\section{Integral equations}

Integral equations methods are commonly used to solve
electromagnetic scattering problems.
We hereafter give a short overview
of the construction of these methods.
We first recall the definitions of the single and double layer potentials,
as well as the fundamental Stratton-Chu formula,
before describing the principle of those integral equations. \\

Let $D_0$ be a compact and connected subset of $\bbR^3$
with a smooth boundary $\Gamma_0$,
defining two open and connected domains,
the interior bounded domain $\Omega_0^-$
and the exterior unbounded domain $\Omega_0^+$.
We denote by $\n$ the unit outward normal to $\Gamma_0$
and by $\gamma_T$ the tangential trace on $\Gamma_0$
from domain $\Omega_0^+$.
The PEC problem can be formulated as follows:
find the electric radiating field $\E$
defined on $\Omega_0^+$
and satisfying the boundary condition $\gamma \E = \u_0$,
where $\gamma = \n \times \gamma_T$ is a trace on $\Gamma_0$,
and $\u_0 = - \n \times \Einc$ is a given current depending on an incident field. \\

The classical vector potential $\caG$
maps a tangential vector-field $\u \in \caD_T'(\Gamma_0)$
to the vector-field defined on $\Omega_0^+$ and $\Omega_0^-$ by
\begin{equation}
\label{GreenPotential1}
\caG \u (x) = - \ \frac{1}{4 \pi} \int_{\Gamma_0}
\frac{e^{ik \| x-y \|}}{\| x-y \|} \u (y) \ dy,
\end{equation}
where $\|.\|$ denotes the euclidean norm on $\bbR^3$.
Then we define the single layer potential $\caT$ and the double layer potential $\caK$ by
\begin{equation}
\label{TKpotentials1}
\caT = \frac{1}{ik} \rot \left( \rot \caG \right) \quad \text{ and } \quad \caK = \rot \caG.
\end{equation}
The electromagnetic potentials satisfy the following important property:
given a current $\u$ on $\Gamma_0$,
the fields $\caT \u$ and $\caK \u$
are automatically solutions of Maxwell equation \iref{Maxwell}
and the radiation condition \iref{Sommerfeld}, \cite{Nedelec01}.
The boundary operators $\T$ and $\K$ are obtained from the electromagnetic potentials
and are defined by
\begin{equation}
\label{TracesTK}
\n \times \T = \n \times \gamma_T (\caT)
\quad \text{ and } \quad
\n \times \K = \n \times \gamma_T (\caK) + \Id / 2.
\end{equation}
It turns out that $\T$ and $\K$ are pseudo-differential operators
respectively of order $+1$ and $-1$ \cite{chazarain}, \cite{ColtonKress83}, \cite{Nedelec01}. \\

The Stratton-Chu formulas \cite{ColtonKress83}, \cite{Nedelec01} use the single and double layer potentials
to express an electric radiating field $\E$ and the related magnetic field $\H$
in terms of their boundary traces.
\begin{equation}
\label{RepresentationEH}
\E= \caT (\n \times \H) - \caK (\n \times \E)
\quad \text{ and } \quad \H= - \caK (\n \times \H) - \caT (\n \times \E).
\end{equation}
These formulas, also known as representation theorem,
are the foundation of integral equations, as we shall see now. \\

The incident field does not satisfy the radiation condition
and therefore the representation theorem \iref{RepresentationEH}
does not apply to $\Einc$ and $\Hinc$.
Instead, because $\Einc$ and $\Hinc$ are continuous on the whole space $\bbR^3$,
their traces have no jump across $\Gamma_0$ and one can show that
\begin{equation}
\label{RepresentationEinc}
0 = \caT (\n \times \Hinc) - \caK (\n \times \Einc)
\quad \text{and} \quad  0 = - \caK (\n \times \Hinc) - \caT (\n \times \Einc).
\end{equation}
Summing up \iref{RepresentationEH} and \iref{RepresentationEinc},
combined with the PEC boundary condition,
we obtain the EFIE and MFIE equations,
\begin{equation}
\label{EFIEandMFIE}
\text{EFIE : } \T \u = - \Einc^\text{tan},
\quad \text{MFIE : } \left( \n \times \K + \frac{1}{2} \Id \right) \u = \n \times \Hinc,
\end{equation}
where $\Einc^\text{tan}$ is the tangential component of $\Einc$
and the unknown $\u$ is equal to $\n \times \gamma_T (\H + \Hinc)$. \\

Unfortunately, the EFIE and the MFIE are well-known to be ill posed at resonant frequencies~\cite{Nedelec01}.
Their linear combination, weighted by an arbitrary parameter $\alpha \in ]0,1[$,
yields the CFIE, which instead is well posed at any frequency \cite{Mitzner68}, \cite{BurtonMiller71},
\begin{equation}
\label{CFIE}
\text{CFIE : } (1-\alpha) \T \u + \alpha \left( \n \times \K + \frac{1}{2} \Id \right) \u
= - (1-\alpha) \Einc^\text{tan} + \alpha \n \times \Hinc.
\end{equation}
In what follows and for the sake of simplicity, since we are mainly interested in the 
interface problem on $\Sigma$, we only concentrate on the EFIE for solving 
the electromagnetic problems inside the subdomains.

\section{Admittance operators and the DDM}
\label{SectionAdmittance}

The structure of the problem \iref{Transmission2}
naturally leads to introduce the so-called admittance operators\footnote{Such operators are
also classically called Dirichlet-to-Neumann or Steklov-Poincar\'e operators.} on $\Sigma$
\begin{equation*}
\Adm_\Sigma^\pm: \E_\text{tan}^\pm \mapsto \n^\pm \times \H^\pm,
\end{equation*}
where $\H^\pm=\frac{1}{ik}\rot \E^\pm$, and $\E^\pm\in W^\pm$ solves $\gamma_T \E^\pm=\E_\text{tan}^\pm$.
Notice that although the input and output data of $\Adm_\Sigma^\pm$ are defined only on $\Sigma$, the admittance operators 
are highly non-local and depend on the whole geometry of the domains $\Omega^\pm$. \\

We remark that 
\begin{equation*}
\Adm_\Sigma^\pm = R^\pm \Adm^\pm P^\pm,
\end{equation*}
where $P^\pm: \caD'(\Sigma) \to \caD'(\Gamma_D^\pm \cup \Sigma)$ extends by 0, on $\partial \Omega^\pm$,
data defined on $\Sigma$, while conversely, $R^\pm: \caD'(\Gamma_D^\pm \cup \Sigma) \to \caD'(\Sigma)$
restricts to $\Sigma$ data defined on $\partial \Omega^\pm$.
Here $\Adm^\pm$ are the admittance operators of $\partial \Omega^\pm$ which map currents $\E_\text{tan}^\pm$
defined on the whole {\em closed} boundaries $\partial \Omega^\pm$ to their magnetic traces $\n^\pm \times \H^\pm$.\\

The system \iref{Transmission2} can then be expressed
in terms of the admittance operators  $\Adm_\Sigma^\pm$
\begin{equation}
\label{Transmission3}
\begin{cases}
\E_\text{tan}^- = \E_\text{tan}^+, \\
- \Adm_\Sigma^- \E_\text{tan}^- = \Adm_\Sigma^+ \E_\text{tan}^+ + \u_\text{rhs},
\end{cases}
\end{equation}
which eventually reduces to
\begin{equation}
\label{DDM}
(\Adm_\Sigma^+ + \Adm_\Sigma^-) \E_\text{tan} = - \u_\text{rhs}, \\
\end{equation}
with $\E_\text{tan} = \E_\text{tan}^+ = \E_\text{tan}^- $. \\

Equation \iref{DDM} is at the heart of our domain decomposition method.
We explain below how the admittance operators $\Adm_\Sigma^+$ and $\Adm_\Sigma^-$ are numerically computed,
while Section \ref{SectionPreconditioner}
is devoted to the preconditioning of the subsequent linear system. \\

The admittance operators $\Adm_\Sigma^\pm$ can be naturally obtained by solving an integral equation,
which involves four electromagnetic potentials described below.
The main difference between our particular case
and the classical theory is that the domains $\Omega^+$ and $\Omega^-$
are \emph{not complementary one to another}.
The boundaries of these domains are therefore distinct,
although they share the same interface $\Sigma$.
Consequently, the convolution operators with the Green kernel
related to the exterior and the interior electromagnetic potentials,
that we next introduce,
are not defined on the same surfaces. \\

Similarly to the potentiel $\caG$ defined by \iref{GreenPotential1},
we define the vector potentials $\caG^\pm$
which map tangential vector-fields $\u^\pm \in \caD_T' \left( \Gamma_D^\pm \cup \Sigma \right)$
to the vector-fields defined on $\Omega^\pm$ by
\begin{equation*}
\caG^\pm \u^\pm (x) = - \ \frac{1}{4 \pi} \int_{\Gamma_D^\pm \cup \Sigma}
\frac{e^{ik \| x-y \|}}{\| x-y \|} \u^\pm (y) \ dy.
\end{equation*}
As before, the potentials $\caG^\pm$ are used to define
the single layer potentials $\caT^\pm$ and the double layer potentials $\caK^\pm$ as follows,
\begin{equation*}
\caT^\pm = \frac{1}{ik} \rot \left( \rot \caG^\pm \right) \quad \text{ and } \quad \caK^\pm = \rot \caG^\pm,
\end{equation*}
while
\begin{equation*}
\n^\pm \times \T^\pm = \n^\pm \times \gamma_T^\pm (\caT^\pm)
\quad \text{ and } \quad
\n^\pm \times \K^\pm = \n^\pm \times \gamma_T^\pm (\caK^\pm) + \Id / 2,
\end{equation*}
where $\gamma_T^\pm$ stand for the tangential traces on $\partial \Omega^+$ and $\partial \Omega^-$.
As previously, as long as the boundaries $\Gamma_D^\pm \cup \Sigma$ are smooth,
pseudo-differential operators $\T^\pm$ and $\K^\pm$ are of order $+1$ and $-1$ respectively. \\

Given an electric field $\E^\pm \in W^\pm$ and its magnetic counterpart $\H^\pm$,
we define the following electromagnetic traces on the boundary $\Gamma_D^\pm \cup \Sigma$,
\begin{equation}
\label{EMtraces}
\sigma_0^\pm \E^\pm = \n^\pm \times \E^\pm \quad \text{ and } \quad
\sigma_1^\pm \E^\pm = \n^\pm \times \H^\pm \quad \text{ on } \partial \Omega^\pm = \Gamma_D^\pm \cup \Sigma.
\end{equation}

Using the representation theorem \iref{RepresentationEH}
in the domain $\Omega^\pm$,
we obtain the expression of any electric field $\E^\pm$ in $W^\pm$
in terms of its boundary traces on $\Gamma_D^\pm \cup \Sigma$.
\begin{equation}
\label{Stratton-Chu2}
\forall \E^\pm \in W^\pm, \quad
\E^\pm = \caT^\pm (\sigma_1^\pm \E^\pm) - \caK^\pm (\sigma_0^\pm \E^\pm).
\end{equation}

Now, for a current $\u_0^\pm \in \caD'_T(\Sigma)$ defined on the fictitious interface $\Sigma$,
we have $\Adm_\Sigma^\pm \u_0^\pm = \sigma_1^\pm (\E^\pm)$
where $\E^\pm \in W^\pm$ is such that  $\E_\text{tan}^\pm = P^\pm \u_0^\pm$.
Several integral formulations can be used to compute effectively $\Adm_\Sigma^\pm \u_0^\pm$. As 
an example, we shall see hereafter that $\Adm_\Sigma^\pm \u_0^\pm=R^\pm \u^\pm$ where
$\T^\pm (\u^\pm) = \left( \frac{1}{2} \Id + \K^\pm \n^\pm \times \right) ( P^\pm \u_0^\pm)$. \\

Indeed, restricting ourselves to the exterior subdomain,
our first goal is to find $\E^+ \in W^+$ such that $\E_\text{tan}^+ = P^+ \u_0^+$.
Applying the trace $\sigma_0^+$ to the Stratton-Chu formula \iref{Stratton-Chu2}
leads to
\begin{equation*}
(\n^+ \times \T^+) (\sigma_1^+ \E^+) = \left( \frac{1}{2} \Id + \n^+ \times \K^+ \right) (\sigma_0^+ \E^+)
\quad \text{ on } \partial \Omega^+.
\end{equation*}
Taking the cross product of the previous equation with $- \n^+$ yields
\begin{equation*}
\T^+ (\n^+ \times \H^+) = \left( \frac{1}{2} \Id + \K^+ \n^+ \times \right) (\E_\text{tan}^+)
\quad \text{ on } \partial \Omega^+.
\end{equation*}
This problem has the form of
an electric field integral equation (EFIE):
\begin{equation}
\label{EFIE}
\text{Find the current } \u^+, \text{ such that } \quad
\T^+ (\u^+) = \left( \frac{1}{2} \Id + \K^+ \n^+ \times \right) ( P^+ \u_0^+)
\quad \text{ on } \partial \Omega^+.
\end{equation}
The restriction to $\Sigma$ of the solution $\u^+$ of \iref{EFIE} is the trace $\n^+ \times \H^+$ we look for,
and we therefore have
\begin{equation*}
\Adm_\Sigma^+ \u_0^+ = R^+ \u^+,
\end{equation*}
as claimed. \\

The cavity ($\Omega^-$) is treated similarly, with the restriction that $\Adm_\Sigma^-$
is well defined. This is the case when $k^2$ is not an eigenvalue for the interior Maxwell problem.

\begin{remark}
Since this method is based on an EFIE formulation, it applies to the computation of the admittance $\Adm_\Sigma^\pm$ in both subdomains,
but with the following \emph{caveat}:
metallic problems having irregular frequencies (\cite{ColtonKress83}, \cite{Nedelec01}),
the EFIE is ill-posed at frequencies close to these resonances.
Despite this drawback, the EFIE is still widely used for it is one of the methods which give the most accurate results.
\end{remark}

\section{Preconditioning the DDM}
\label{SectionPreconditioner}

As we shall see in Section \ref{Results}, the equation \iref{DDM} unfortunately leads after discretization to an ill-conditioned linear system.
We therefore propose a simple preconditioner 
in order to obtain a tractable DDM which improves the convergence rate. \\

The purpose of this section
is to introduce a theoretical framework
which suggests that the preconditioned equation is well posed.
Unfortunately, these theoretical results only apply so far in an ideal setting
which is not satisfied in practical situations.
They should therefore be regarded as
a heuristic and hopefully as the foundation of future more general results.

\subsection{A preliminary lemma}

Let $D_0$ be a compact subset of $\bbR^3$
with a smooth boundary $\Gamma_0$,
defining two open domains :
the interior domain $\Omega_0^-$ (the interior of $D_0$)
and the exterior domain $\Omega_0^+ = \bbR^3 \setminus D_0$.
We define as before the admittance operators
related to the boundary $\Gamma_0$,
$\Adm_0^-$ for the interior domain
and $\Adm_0^+$ for the exterior domain.
Also, the single layer potential $\caT$
and its tangential trace $\emph{T}$ are
respectively given by  \iref{TKpotentials1} and \iref{TracesTK}
for the boundary $\Gamma_0$. \\

The idea for preconditioning our method is based on the
following lemma.

\begin{lemma}
\label{LemExact}
If $k^2$ is not an eigenvalue for the interior Maxwell problem, then
$\Adm_0^-$ is well defined and we have
\begin{eqnarray}
\label{RightPrec1}
(\Adm_0^+ + \Adm_0^-) \emph{T} & = & \Id , \\
\label{LeftPrec1}
\quad \emph{T} (\Adm_0^+ + \Adm_0^-) & = & \Id.
\end{eqnarray}
\end{lemma}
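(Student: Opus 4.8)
The plan is to read both identities as statements about the jump relations of the single layer field $\caT$ across $\Gamma_0$, combined with uniqueness for the interior and exterior Maxwell problems. I orient the normals exactly as in Section~\ref{SectionDDM}: the inward normals to $\Omega_0^\pm$ satisfy $\n^+=\n$ and $\n^-=-\n$, where $\n$ is the outward normal to $\Gamma_0$ used in \iref{TracesTK}. With $\Adm_0^\pm\,\v=\n^\pm\times\H^\pm$ this means that for a tangential field $\v$ shared by an exterior radiating solution $\E^+$ and an interior solution $\E^-$, the combination $(\Adm_0^++\Adm_0^-)\v=\n\times\H^+-\n\times\H^-$ is precisely the jump $[\n\times\H]$ of the magnetic trace across $\Gamma_0$. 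The assumption that $k^2$ is not an interior Maxwell eigenvalue is used to guarantee that the interior field with prescribed tangential electric trace exists and is unique, hence that $\Adm_0^-$ is well defined; I would record this first via the Fredholm alternative for the interior problem.

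\textbf{Right identity \iref{RightPrec1}.} I would start from a current $\u$ on $\Gamma_0$ and consider the single layer field $\caT\u$, which by \iref{TKpotentials1} solves \iref{Maxwell} in both $\Omega_0^+$ and $\Omega_0^-$ and is radiating. Its tangential electric trace is continuous across $\Gamma_0$ and equals $\T\u$ by \iref{TracesTK}. Therefore the exterior radiating solution and the interior solution carrying the tangential trace $\T\u$ are exactly the restrictions of $\caT\u$ to $\Omega_0^+$ and to $\Omega_0^-$, the latter being the unique such interior field under our hypothesis. Evaluating the admittance operators then gives $(\Adm_0^++\Adm_0^-)\T\u=[\n\times\H]_{\caT\u}$, and the standard jump relation for the single layer potential (\cite{Nedelec01},\cite{ColtonKress83}), which in the present sign convention reads $[\n\times\H]_{\caT\u}=\u$, yields \iref{RightPrec1}.

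\textbf{Left identity \iref{LeftPrec1}.} Given a tangential field $\v$, let $\E^+$ be the exterior radiating solution and $\E^-$ the unique interior solution with $\gamma_T\E^\pm=\v$, and set $\u=(\Adm_0^++\Adm_0^-)\v$, so that $\u=\n\times\H^+-\n\times\H^-$. Let $\mathbf{W}$ denote the field equal to $\E^+$ in $\Omega_0^+$ and $\E^-$ in $\Omega_0^-$. The decisive step is to show $\mathbf{W}=\caT\u$. To this end I would examine $\mathbf{F}=\mathbf{W}-\caT\u$: both $\mathbf{W}$ and $\caT\u$ have continuous tangential electric trace, so $\mathbf{F}$ has no jump of $\n\times\E$; and $\mathbf{W}$ and $\caT\u$ both exhibit magnetic jump $\u$, so $\mathbf{F}$ has no jump of $\n\times\H$. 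Thus $\mathbf{F}$ satisfies \iref{Maxwell} off $\Gamma_0$, has continuous tangential traces of $\E$ and $\H$, and is radiating; hence it extends to an entire radiating solution of \iref{Maxwell} on $\bbR^3$ and vanishes by \iref{Sommerfeld}. Taking the tangential trace of $\mathbf{W}=\caT\u$ then gives $\v=\T\u=\T(\Adm_0^++\Adm_0^-)\v$, which is \iref{LeftPrec1}.

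\textbf{Main obstacle.} I expect the crux to be twofold. First, pinning down the exact jump relation for $\caT$ with the orientation conventions of Section~\ref{SectionDDM}, so that the magnetic trace jump equals $+\u$ (not $-\u$) and matches the right-hand side $\Id$; this requires careful bookkeeping of $\n^\pm$ together with the definitions \iref{TKpotentials1}--\iref{TracesTK}. Second, justifying that a field with continuous tangential traces of $\E$ and $\H$ across the source-free interface $\Gamma_0$ is genuinely a global solution of \iref{Maxwell}, so that the exterior uniqueness theorem forces $\mathbf{F}=0$; the representation formula \iref{RepresentationEH} can be invoked here to make the regularity argument rigorous. The remaining steps are routine once the interior problem is known to be uniquely solvable.
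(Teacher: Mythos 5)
Your treatment of \iref{RightPrec1} is essentially identical to the paper's proof: both evaluate the admittance sum on $\T\u$ by realizing the exterior and interior fields as the two restrictions of the single layer field $\caT\u$ (using uniqueness for the interior problem, hence the hypothesis on $k^2$, to identify the interior restriction as the field defining $\Adm_0^-$), and both conclude with the unit jump of the magnetic trace of $\caT$ across $\Gamma_0$. For \iref{LeftPrec1}, however, you take a genuinely different route. The paper uses the hypothesis to assert that $\T$ is \emph{bijective}, writes $\u = \T\v$, and then reduces \iref{LeftPrec1} to the computation already done for \iref{RightPrec1}, namely $\T(\Adm_0^+ + \Adm_0^-)\T\v = \T\v = \u$. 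You instead fix $\v$, build the piecewise field $\mathbf{W}$ from the exterior and interior solutions sharing the tangential trace $\v$, set $\u = (\Adm_0^+ + \Adm_0^-)\v$, and identify $\mathbf{W} = \caT\u$ by a transmission-uniqueness argument: the difference has no jump of $\n\times\E$ or of $\n\times\H$, is radiating, hence extends to an entire solution of \iref{Maxwell} and vanishes by Rellich's lemma; taking tangential traces then gives $\v = \T\u$. Both arguments are correct and both use the non-eigenvalue hypothesis (yours for existence and uniqueness of the interior solution, the paper's additionally for injectivity of $\T$). The trade-off is this: the paper's proof is purely algebraic once the bijectivity of the EFIE operator $\T$ is granted, but that mapping property --- surjectivity in particular --- is itself a nontrivial Fredholm-theoretic fact; your proof never assumes it, and in fact establishes surjectivity of $\T$ as a byproduct, since it exhibits every $\v$ as $\T\u$. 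The price is the extension-and-Rellich step, which, as you correctly flag, needs the representation formula \iref{RepresentationEH} (or hypoellipticity) to be made rigorous, together with solvability of the two boundary value problems --- though the latter is anyway implicit in the very definition of $\Adm_0^\pm$.
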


\begin{proof}
Let $\u \in \caD_T'(\Gamma_0)$ be a current on $\Gamma_0$.
We define $\E = \caT \u$ on $\Omega^+$ and $\Omega^-$.
Then by continuity of the potential $\caT$ across $\Gamma_0$,
\begin{equation*}
\E_\text{tan}^+ = \E_\text{tan}^- = \T \u.
\end{equation*}
Thus
\begin{eqnarray*}
\n^+ \times \H^+ +\n^- \times \H^-
& = & \Adm_0^+ \E_\text{tan}^+ + \Adm_0^- \E_\text{tan}^- = (\Adm_0^+ + \Adm_0^-) (\T \u) \\
& = & (\sigma_1^+ \caT - \sigma_1^- \caT) \u = \u,
\end{eqnarray*}
since the Neumann gap of the single layer potential is the identity \cite{Nedelec01}.
We recall that $\sigma_1^\pm$ is the electromagnetic trace defined in \iref{EMtraces}.
We have proven \iref{RightPrec1}. \\

For \iref{LeftPrec1}, since $k^2$ is not an eigenvalue for the interior Maxwell problem,
the operator $\T$ is bijective
and there exists $\v \in \caD'_T(\Gamma_0)$ such that $\u = \T \v$.
Defining $\E = \caT \v$ on $\Omega^+$ and $\Omega^-$ leads to
\begin{equation*}
\E_\text{tan}^+ = \E_\text{tan}^- = \T \v = \u.
\end{equation*}
Consequently,
\begin{equation*}
\T (\Adm_0^+ + \Adm_0^-) \u
= \T (\sigma_1^+ - \sigma_1^-) \caT \v
= \T \v = \u,
\end{equation*}
since the Neumann gap of the single layer potential is the identity.
\end{proof}

This lemma suggests to precondition the equation \iref{DDM}
by the operator $\T_\Sigma$ defined by the operator
$\T$ restricted to the interface $\Sigma$.

\begin{definition}[Operator $\T_\Sigma$] \;\ \\
We denote by $F$ the explicit kernel of the single layer potential $\caT$,
which can be computed from \iref{GreenPotential1} and \iref{TKpotentials1}.
The operator $\textnormal{T}_\Sigma: \caD'_T(\Sigma) \to \caD'_T(\Sigma)$ is defined as the convolution operator,
restricted to $\Sigma$
\begin{equation*}
\textnormal{T}_\Sigma \u(x) = \int_{\Sigma} F(x-y) \u(y) \ dy.
\end{equation*}
Notice that $\textnormal{T}_\Sigma$ does not depend on $\Gamma_D^\pm$.
\end{definition}

\subsection{A preconditioner for the DDM}

We aim at proving that the operators
$\T_\Sigma (\Adm_\Sigma^+ + \Adm_\Sigma^-)$
and $(\Adm_\Sigma^+ + \Adm_\Sigma^-) \T_\Sigma$
are compact perturbations of the identity,
using arguments of pseudo-differential theory.
This unfortunately restricts our results to smooth boundaries $\partial \Omega^\pm$
which in turn implies that $\partial \Omega$ is not smooth in general (see \textsc{Fig.}~\ref{picture1}). \\

Therefore, we consider a simplified setting in which we assume 
that the boundary $\Gamma_D$ is not smooth but such that both boundaries
$\Gamma_D^+ \cup \Sigma$ and $\Gamma_D^- \cup \Sigma$ are of $\caC^\infty$ regularity.
For instance, in dimension 2, this implies the existence of two cusps (\textsc{Fig.}~\ref{picture4}, on the left ). \\

\begin{figure}[h!]
\begin{center}
\begin{minipage}{0.45\linewidth}
\includegraphics[scale=0.35]{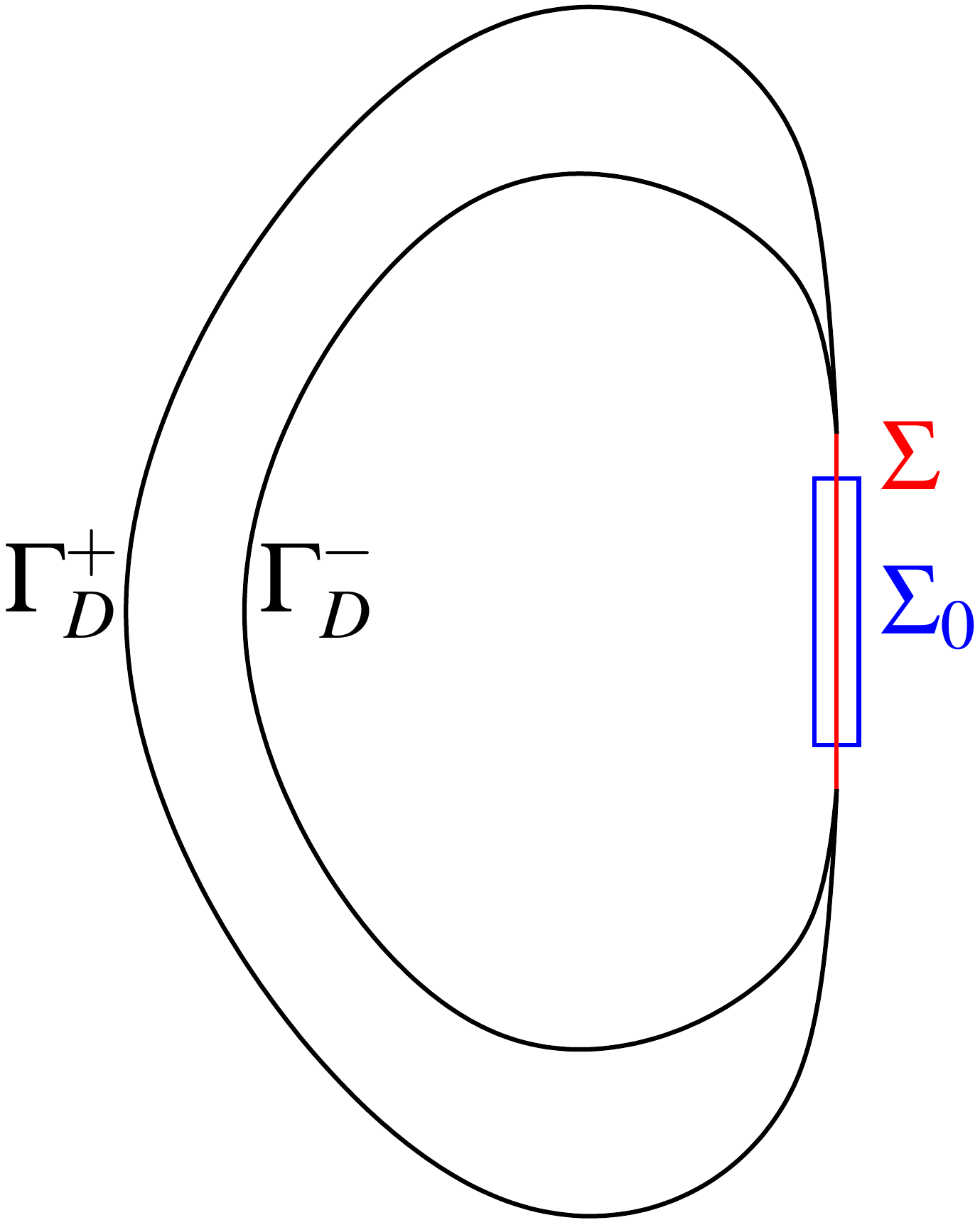}
\end{minipage}
\hfill
\begin{minipage}{0.45\linewidth}
\includegraphics[scale=0.35]{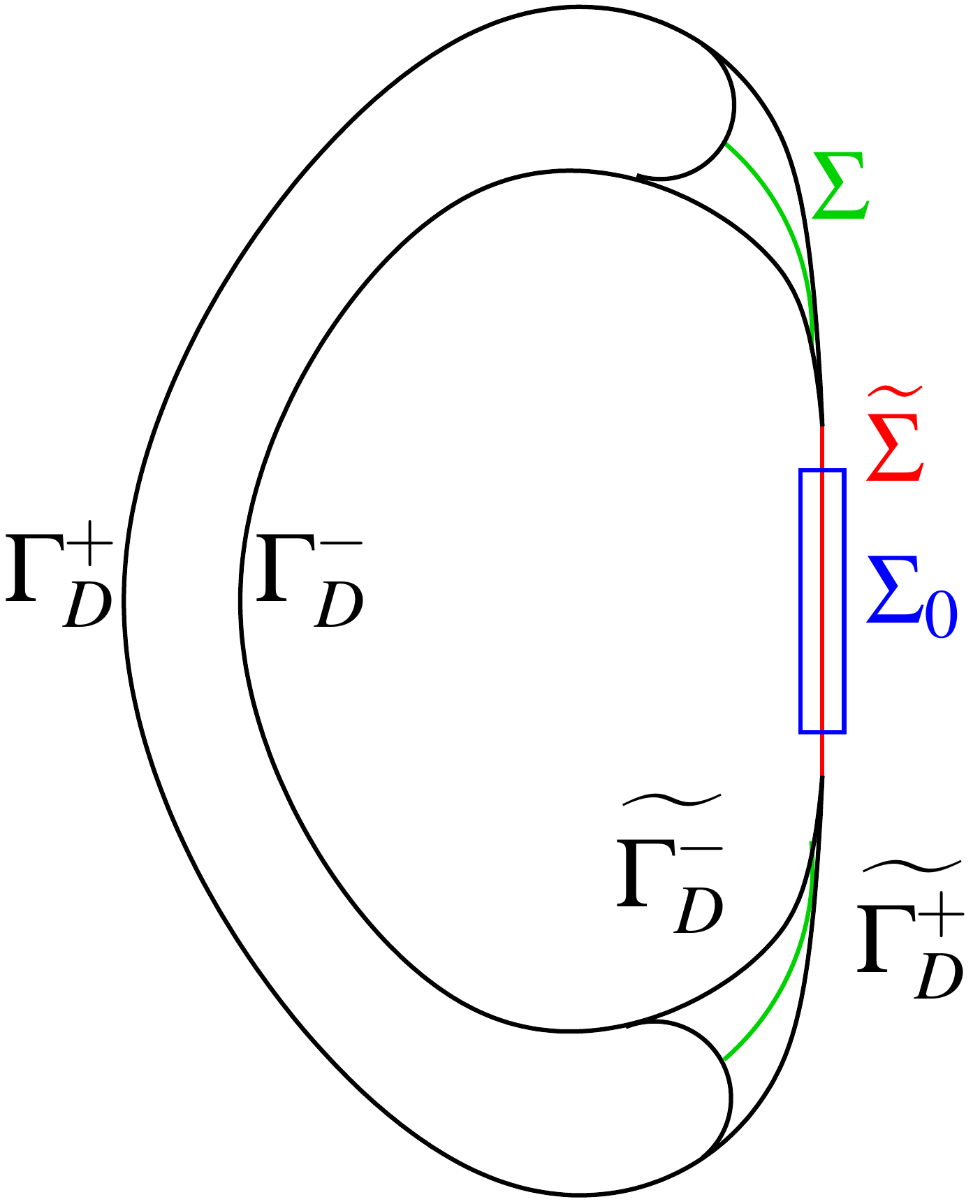}
\end{minipage}
\caption{
The setting for the theorem (left) and the real case (right). }
\label{picture4}
\end{center}
\end{figure}

Let $\Sigma_0$ be a compact subset of $\Sigma$,
and let $\chi_0: \Sigma \to [0,1]$ be
a $\caC^\infty$ cut-off function,
supported in the interior of $\Sigma$,
and such that  $\chi_0 = 1$ on $\Sigma_0$.
We denote by $\T_\Sigma = R^\pm \T^\pm P^\pm$
and by $\wt{\T_\Sigma}$ the operator
\begin{equation}
\label{Ttilde}
\wt{\T_\Sigma} = \chi_0 \T_\Sigma.
\end{equation}

We prove that $\wt{\T_\Sigma}$ is a good \emph{left} preconditioner
when applied to functions supported on $\Sigma_0$.

\newpage

\begin{theorem}[A preconditioner for the DDM] \;\ \\
\label{Theorem}
Let $\H_{T,0}^s (\Sigma) = \{ \u \in \H_T^s(\Sigma), \text{ such that } \u = 0 \text{ on } \Sigma \setminus \Sigma_0 \}$.
For all $\u \in \H_{T,0}^s(\Sigma)$, we have
\begin{equation}
\label{LeftPrec2}
\wt{\emph{T}_\Sigma} (\Adm_\Sigma^+ + \Adm_\Sigma^-) \u = \u + \v,
\end{equation}
where $\v \in \H_T^{s+1}(\Sigma)$.
More precisely, $\wt{\emph{T}_\Sigma} (\Adm_\Sigma^+ + \Adm_\Sigma^-)$
is a compact perturbation of the identity in $\H_{T,0}^s(\Sigma)$.
\end{theorem}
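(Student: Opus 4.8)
The plan is to reduce the statement to the exact identity of Lemma \ref{LemExact} by microlocal (pseudo-differential) localization near $\Sigma_0$. The starting point is the pair of representations $\Adm_\Sigma^\pm = R^\pm\Adm^\pm P^\pm$ and $\T_\Sigma = R^\pm\T^\pm P^\pm$, together with the facts, valid in the idealized setting where $\partial\Omega^\pm$ are of $\caC^\infty$ regularity, that $\T^\pm$ are elliptic pseudo-differential operators of order $+1$ and that the admittances $\Adm^\pm = (\T^\pm)^{-1}\left(\frac{1}{2}\Id + \K^\pm\n^\pm\times\right)$ are pseudo-differential of order $-1$ on the closed surfaces $\partial\Omega^\pm$ (here one uses that $k^2$ is not an interior eigenvalue, so that $\T^-$, and hence $\Adm^-$, is well defined). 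I would first record that $\T_\Sigma(\Adm_\Sigma^+ + \Adm_\Sigma^-)$ has order $0$, so that the claimed remainder $\v\in\H_T^{s+1}(\Sigma)$ amounts to saying that the operator differs from the identity by an operator of order $\le -1$.

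The next step is to exploit pseudolocality. Since $\Sigma_0$ is a compact subset of the interior of $\Sigma$, its support lies at positive distance from the junction $\partial\Sigma = \Gamma_D^\pm\cap\Sigma$ and from $\Gamma_D^\pm$ themselves. For a density $\u\in\H_{T,0}^s(\Sigma)$, supported in $\Sigma_0$, the current $\Adm^\pm P^\pm\u$ is therefore smooth on $\Gamma_D^\pm$, because the Schwartz kernel of a $\Psi$DO is smooth off the diagonal and the input and output supports are separated. Consequently the intermediate restriction--extension $P^\pm R^\pm$ that arises when composing $\T_\Sigma$ with $\Adm_\Sigma^\pm$ discards only a smooth contribution; equivalently, modulo a smoothing (hence compact) operator, one may replace $\Adm_\Sigma^\pm$ and $\T_\Sigma$ by the honest operators $\Adm^\pm$ and $\T^\pm$ acting on densities localized near $\Sigma_0$. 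The cut-off $\chi_0$, equal to $1$ on $\Sigma_0$, guarantees both that $\chi_0\u = \u$ and that the output is localized in the interior of $\Sigma$.

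I would then introduce an auxiliary smooth closed surface $\Gamma_0$ that coincides with $\Sigma$ on a fixed neighborhood of $\overline{\Sigma_0}\cup\mathrm{supp}\,\chi_0$ and closes up elsewhere, away from $\Sigma_0$, bounding complementary domains $\Omega_0^\pm$. On the region where $\partial\Omega^\pm$ and $\Gamma_0$ agree, the full symbols of the single-layer operators agree, and since a parametrix of an elliptic $\Psi$DO is determined locally by its symbol, the operators $\Adm^\pm$ and $\Adm_0^\pm$ also agree there modulo smoothing; the normals match ($\n^+ = -\n^-$ identifies the two sides of $\Sigma$ with the exterior and interior sides of $\Gamma_0$). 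Hence, sandwiched between cut-offs supported near $\Sigma_0$, each of $\T^\pm,\Adm^\pm$ coincides with $\T^{\Gamma_0},\Adm_0^\pm$ up to smoothing operators. Transporting the computation to $\Gamma_0$ and invoking the exact identity \iref{LeftPrec1}, namely $\T^{\Gamma_0}(\Adm_0^+ + \Adm_0^-) = \Id$, I obtain $\chi_0\T_\Sigma(\Adm_\Sigma^+ + \Adm_\Sigma^-)\u = \chi_0\u + \v = \u + \v$, where $\v$ is produced by a smoothing operator, so that $\v\in\H^{s'}(\Sigma)$ for every $s'$, in particular $\v\in\H_T^{s+1}(\Sigma)$. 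As $\Sigma$ is bounded, the map $\u\mapsto\v$ is bounded into $\H_T^{s+1}(\Sigma)$ and the embedding $\H_T^{s+1}(\Sigma)\hookrightarrow\H_T^s(\Sigma)$ is compact, which yields the compact perturbation of the identity.

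The main obstacle I expect is the rigorous bookkeeping of the localizations: one must verify that every restriction $R^\pm$, extension $P^\pm$ and surface identification contributes only a smoothing (or at worst order $\le -1$) error, which hinges entirely on keeping all supports away from the non-smooth junction $\partial\Sigma$, where the pseudo-differential calculus is unavailable. This is precisely the role played by the hypotheses $\u\in\H_{T,0}^s(\Sigma)$ and by the cut-off $\chi_0$. A secondary delicate point is the well-posedness of $\Adm^\pm$, i.e.\ the invertibility of $\T^\pm$, which holds only for generic $k$ and is the source of the irregular-frequency caveat noted at the end of the previous section.
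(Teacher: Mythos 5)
Your strategy is genuinely different from the paper's, but its pivotal step is not established. Everything in your reduction hinges on the transfer claim that, near $\Sigma_0$, the admittance operators of the true subdomains agree modulo smoothing with those of the auxiliary complementary domains, i.e.\ $\chi\,\Adm^\pm\chi \equiv \chi\,\Adm_0^\pm\chi$ modulo order $-\infty$ for cut-offs $\chi$ supported where the surfaces coincide. Your justification --- write $\Adm^\pm=(\T^\pm)^{-1}\bigl(\tfrac12\Id+\K^\pm\n^\pm\times\bigr)$ and invoke the local determination of the parametrix of the elliptic operator $\T^\pm$ --- fails, because for Maxwell the single layer operator $\T^\pm$ is \emph{not} elliptic: its principal symbol of order $+1$ comes only from the hypersingular part and is proportional to $\xi\otimes\xi/|\xi|$, a rank-one matrix. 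There is therefore no classical parametrix, and your preliminary claim that $\Adm^\pm$ is a pseudo-differential operator of order $-1$ is also false: the Maxwell admittance has mixed order (order $+1$ on one Hodge component and $-1$ on the other, as one sees from the TE/TM admittances of a sphere). Note that your earlier step, ``$\Adm^\pm P^\pm\u$ is smooth on $\Gamma_D^\pm$ because the Schwartz kernel of a pseudo-differential operator is smooth off the diagonal,'' already presupposes that $\Adm^\pm$ is pseudolocal --- essentially the same unproven fact. The transfer statement is plausibly true (it can be attacked via local elliptic regularity for the divergence-augmented Maxwell system, since the difference of the two fields has vanishing tangential trace near $\Sigma_0$), and it is what your stronger-than-required conclusion (a smoothing remainder rather than an order $-1$ one) rests on; but it is a substantive microlocal theorem that you neither prove nor can cite from the paper. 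Two smaller loose ends: applying Lemma \ref{LemExact} to $\Gamma_0$ requires $k^2$ not to be an interior Maxwell eigenvalue of the auxiliary domain enclosed by $\Gamma_0$ (and, for \iref{LeftPrec1}, invertibility of the single layer operator on $\Gamma_0$), conditions on a surface you construct and must therefore arrange.

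The paper's proof avoids all of this: it never inverts $\T^\pm$ and never attributes any pseudo-differential structure to the admittances. From Stratton--Chu on each smooth closed boundary $\partial\Omega^\pm$ it derives the exact Calder\'on-type identity \iref{TA}, $\T^\pm\Adm^\pm=\tfrac12\Id+\K^\pm\n^\pm\times$, and then decomposes $\wt{\T_\Sigma}\Adm_\Sigma^\pm=\chi_0R^\pm(\T^\pm\Adm^\pm)P^\pm+D^\pm$, where $D^\pm=\chi_0R^\pm\T^\pm(P^\pm R^\pm-\Id)\Adm^\pm P^\pm$ is smoothing for a purely geometric reason: $(P^\pm R^\pm-\Id)\Adm^\pm P^\pm\u$ is supported on $\Gamma_D^\pm$, at positive distance from $\mathrm{supp}\,\chi_0$, and the kernel of $\T^\pm$ is smooth off the diagonal --- no pseudolocality of $\Adm^\pm$ is needed. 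Each subdomain then contributes $\tfrac12\Id$ plus the order $-1$ (hence compact) operator $\chi_0R^\pm(\K^\pm\n^\pm\times)P^\pm$, and the two halves sum to the identity. If you wish to keep your auxiliary-surface route, the admittance-transfer lemma is the thing you must actually prove; otherwise, the per-subdomain identity \iref{TA} yields the theorem using only the localization bookkeeping you already have.
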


\begin{proof}
The representation theorem \iref{Stratton-Chu2} applied to the smooth boundary
$\Gamma_D^+ \cup \Sigma$  yields
\begin{equation*}
(\n^+ \times \T^+) (-\Adm^+ \n^+ \times) - \left( \n^+ \times \K^+ - \frac{1}{2} \Id \right) = \Id.
\end{equation*}
Therefore, 
\begin{equation}
\label{TA}
\T^+ \Adm^+ = \frac{1}{2} \Id + \K^+ \n^+ \times.
\end{equation}
Since the boundary $\Gamma_D^+ \cup \Sigma$ is smooth,
the operator $\K^+ \n^+ \times$ is of order $-1$.
Our goal is to extend this result to the operator $\wt{\T_\Sigma} \Adm_\Sigma^+$.
We have
\begin{eqnarray*}
\wt{\T_\Sigma} \Adm_\Sigma^+ & = & \left( \chi_0 R^+ \T^+ P^+ \right) \left( R^+ \Adm^+ P^+ \right) \\
& = & \chi_0 R^+ (\T^+ \Adm^+) P^+
+ \chi_0 R^+ \T^+ \left(P^+ R^+ \Adm^+ - \Adm^+ \right) P^+.
\end{eqnarray*}

It is clear to see that for $\u \in \H_{T,0}^s(\Sigma)$,
$\v := \left(P^+ R^+ \Adm^+ - \Adm^+ \right) P^+ \u = (P^+ R^+ - \Id) \Adm^+ P^+ \u$ vanishes on $\Sigma$.
Since $\T^+$ is a convolution operator with a kernel $F(x,y)$
which is $\caC^\infty$ for $x \neq y$,
we have $\chi_0 R^+ \T^+ \v \in \H_T^\infty (\Sigma)$.
This shows that the operator
\begin{equation*}
D^+ = \chi_0 R^+ \T^+ \left(P^+ R^+ \Adm^+ - \Adm^+ \right) P^+
\end{equation*}
is of order $- \infty$.
Note that we have used the fact  that the support of $\chi_0$
is included in the interior of $\Sigma$. \\

On the other hand, \iref{TA} leads to
\begin{equation*}
\chi_0 R^+ (\T^+ \Adm^+) P^+ = \chi_0 R^+ \left( \frac{1}{2} \Id + \K^+ \n^+ \times \right) P^+
= \chi_0 \frac{1}{2} \Id + \chi_0 R^+ (\text{K}^+ \n^+ \times) P^+.
\end{equation*}
Notice that $\ds \chi_0 \frac{1}{2} \Id = \frac{1}{2} \Id$ in $\H_{T,0}^s(\Sigma)$
and that $\chi_0 R^+ (\text{K}^+ \n^+ \times) P^+$ is a pseudo-differential operator of order $-1$ in $\H_{T,0}^s(\Sigma)$,
and is therefore compact.
Remark that we have used the fact that $P^+ (\H_{T,0}^s(\Sigma)) \subset \H_T^s (\partial \Omega^+)$. \\

Having the same results for the interior case, we obtain on $\H_{T,0}^s(\Sigma)$
\begin{equation*}
\wt{\text{T}_\Sigma} (\Adm_\Sigma^\pm)
= \frac{1}{2} \Id + \chi_0 R^\pm (\text{K}^\pm \n^\pm \times) P^\pm + D^\pm,
\end{equation*}
where $\chi_0 R^\pm (\text{K}^\pm \n^\pm \times) P^\pm$ and $D^\pm$
are pseudo-differential operators of order $-1$ and $-\infty$ respectively. This concludes the proof.
\end{proof}

In the real case, $\Gamma_D$ is smooth
and thus the boundaries $\Gamma_D^\pm \cup \Sigma$
are both lipschitzian but not of $\caC^1$~regularity.
Therefore the operator $\K^\pm$ is no longer
a compact operator in $\H_T^s \left( \Gamma_D^\pm \cup \Sigma \right) $.
To study this case,
a first possibility is to come back to
the case of the theorem by distorting the boundaries $\Gamma_D^\pm \cup \Sigma$
in new boundaries $\wt{\Gamma_D^\pm} \cup \wt{\Sigma} $
such that these are $\caC^\infty$ (see \textsc{Fig.} \ref{picture4}, on the right),
and by introducing the cut-off function~$\chi_0$.
The theoretical analysis of these two approximations
(the change of boundaries and the multiplication by the smooth function $\chi_0$)
is not straightforward.
A more direct approach would be to extend
the theory of integral equations on surfaces with singularities.
Such a theory was developed for bi-dimensional Helmholtz problems in \cite{MolkoThesis},
but its extension to three-dimensional Maxwell problems remains to be done.

\section{Numerical results}
\label{Results}

In this section, we first describe the numerical discretization chosen for
the admittance operators $\Adm_\Sigma^+$ and $\Adm_\Sigma^-$.
We then explain the discretization of the preconditioning by the operator $\T_\Sigma$
of the equation \iref{DDM}, which couples the subdomains in our domain decomposition method. \\

We want to solve a numerical discretization of 
\begin{equation*}
(\Adm_\Sigma^+ + \Adm_\Sigma^-) \u = \u_0 \quad \text{ on } \Sigma.
\end{equation*}
We recall that
$\Adm_\Sigma^\pm \v_0 = R^\pm \v^\pm$ where $\v^\pm$ is solution of the EFIE:
$\T^\pm (\v^\pm) = \left( \frac{1}{2} \Id + \K^\pm \n^\pm \times \right) ( P^\pm \v_0)$.
To describe the action of the operator $\Adm_\Sigma^\pm$, one needs to discretize the EFIE. \\

We denote by $\Gamma_h^\pm$ a family of triangulations of $\partial \Omega^\pm = \Gamma_D^\pm \cup \Sigma$
such that $\Sigma_h := \Gamma_h^+ \cap \Gamma_h^-$ is a family of triangulations of $\Sigma$.
The space of $H_\text{div}$-conforming Rao-Wilton-Glisson finite elements
on $\Sigma_h$ is denoted by $X_h$, and we denote by $(\varphi_i)_{1 \leq i \leq N}$ its basis functions.
Similarly, the notation $X_h^\pm$ stand for the spaces of RWG finite elements on $\Gamma_h^\pm$,
associated with basis functions $(\psi_i^\pm)_{1 \leq i \leq N^\pm}$,
where $N^\pm > N$ and where we assume that $\forall i \leq N$, $\psi_i^\pm = \varphi_i$.
These assumptions will allow us below to apply the preconditioner to
a vector in the space $X_h$. \\

Let us describe the numerical computation of operator $\Adm_\Sigma^+$
applied to a vector $\v_0 = \sum_{i=1}^N v_{0,i} \varphi_i$ of $X_h$.
We first extend $\v_0$ to a vector $\v_0^+$ of $X_h^+$ defined by
\begin{equation*}
\v_0^+ = P^+ \v_0 =  \left( \sum_{i=1}^{N} v_{0,i} \varphi_i + \sum_{i=N+1}^{N^+} 0 \times \psi_i^+ \right) \in X_h^+,
\end{equation*}
and we denote by
\begin{equation*}
V_0^+ = (v_{0,1}, \ldots, v_{0,N}, 0, \ldots, 0 )^T
\end{equation*}
the vector of its components in the basis of $X_h^+$. \\

Given an operator $A$ and a space $X$ of RWG functions associated with a triangulation $T_0$,
we denote by $[A]_X$ its Galerkin matrix for the $L^2$-product using the basis functions $(\theta_i)_{i}$ of $X$,
namely $([A]_X)_{ij} = \int_{T_0} A \theta_i \cdot \theta_j$.
Consequently, the notation $[\T^+]_{X_h^+}$ stands for the Galerkin matrix of the single layer operator
defined on the triangulation $X_h^+$. \\

The EFIE can be discretized as follows :
\begin{equation*}
\text{Find } V^+ = (v_{1}, \ldots, v_{N^+})^T
\text{ such that }
[\T^+]_{X_h^+} V^+ = \left[ \frac{1}{2} \Id + \K^\pm \n^\pm \times \right]_{X_h^+} V_0^+.
\end{equation*}
and we have $\v^+ = \sum_{i=1}^{N^+} v_i \psi_i^+$.
The vector $R^+ \v^+$ is finally given by
$R^+ \v^+ = \sum_{i=1}^N v_i \varphi_i$. \\

From now on, we denote by $\left< \Adm^\pm_{X_h^\pm} \right>$ the numerical computation
of $\Adm_\Sigma^\pm$ described above.
Using the former finite elements, the equation \iref{DDM} takes the form of the linear system
\begin{equation*}
\left( \left< \Adm^+_{X_h^+} \right> + \left< \Adm^-_{X_h^-} \right> \right) U = U_0,
\end{equation*}
where $U = (u_1, \ldots, u_n)^T$ and $U_0 = (u_{0,1}, \ldots, u_{0,n})^T$,
with $\u = \sum_{i=1}^N u_i \varphi_i$ and $\u_0 = \sum_{i=1}^N u_{0,i} \varphi_i$. \\

To precondition the DDM, we have mathematically proposed a
multiplication by the operator $\T_\Sigma$.
Numerically speaking, one wants to obtain a linear system close to the identity matrix.
If we only multiplied the numerical vector by the matrix $[\T_\Sigma]_{X_h}$,
we would obtain a linear system close to the mass matrix $[\Id]_{X_h}$.
Therefore, we have to do a numerical multiplication by the matrix $[\Id]_{X_h}^{-1} [\T_\Sigma]_{X_h}$,
in order to solve a system close to the identity matrix, and then better conditioned.
As illustrated below,
preconditioning the method with the Galerkin matrix $[\T_\Sigma]_{X_h}$
is not enough to ensure an optimized convergence.
One also needs to inverse the system by the mass matrix on the interface,
which is realized through an iterative solution, of small numerical cost
thanks to the sparsity of the matrix $[\Id]_{X_h}$.
This operation converts a vector
whose coefficients are the $L^2$-scalar products with the basis functions,
into an amplitude vector
(a vector whose coefficients are the coordinates in the basis functions). \\

We denote by DDM~Y0 the original unpreconditioned equation
related to the linear system $(\Adm_\Sigma^+ + \Adm_\Sigma^-)$,
\begin{equation*}
\left( \left< \Adm^+_{X_h^+} \right> + \left< \Adm^-_{X_h^-} \right> \right) U = U_0.
\end{equation*}
DDM~Y1 is the equation with the left preconditioner
being the Galerkin matrix of the single layer operator,
\begin{equation*}
[\T_\Sigma]_{X_h} \left( \left< \Adm^+_{X_h^+} \right> + \left< \Adm^-_{X_h^-} \right> \right) U = [\T_\Sigma]_{X_h} U_0.
\end{equation*}
DDM~Y2 is the equation with the left preconditioner
being the Galerkin matrix of the single layer operator,
with an additional inversion by the mass matrix $[\Id]_{X_h}$,
\begin{equation*}
[\Id]_{X_h}^{-1} [\T_\Sigma]_{X_h} \left( \left< \Adm^+_{X_h^+} \right> + \left< \Adm^-_{X_h^-} \right> \right) U
= [\Id]_{X_h}^{-1} [\T_\Sigma]_{X_h} U_0.
\end{equation*}
DDM~Y3 is the equation with the right preconditioner
being the Galerkin matrix of the single layer operator,
and an inversion by the mass matrix $[\Id]_{X_h}$
\begin{equation*}
\left( \left< \Adm^+_{X_h^+} \right> + \left< \Adm^-_{X_h^-} \right> \right) [\Id]_{X_h}^{-1} [\T_\Sigma]_{X_h} U
= U_0.
\end{equation*}

Let us remark that we have to solve two kinds of linear systems.
The first one is the linear system arising from the DDM itself.
The second one is made of the systems which come from the discretization of the integral equations inside each subdomain.
In order to solve both of them, we use the GMRES algorithm.
Notice that the numerical scheme is a doubly nested iterative method.

\newpage
\subsection{Validation of the method}

First of all, we consider the degenerate case where
there is \emph{no scattering object}.
We denote by $\Sigma$ the sphere centered at the origin
and of diameter 1m, and we decompose $\bbR^3$ into two subdomains:
the interior and the exterior of $\Sigma$.
Notice that Lemma \ref{LemExact} applies to this situation.
The sphere meshed with 168 DoF is shown on \textsc{Fig.}~\ref{picture5} (left),
while the convergence curves are presented on
\textsc{Fig.}~\ref{picture5} (right)
at the frequency 68 MHz,
and for the four equations above.
Our first observation is that
the unpreconditioned DDM~Y0 converges really slowly
in comparison with the three preconditioned DDM~Y1, DDM~Y2 and DDM~Y3.
DDM~Y2 converges faster than DDM~Y1,
which lacks the inversion by the mass matrix.
The results obtained with the right preconditioner (DDM~Y3)
are comparable with those of the left preconditioner (DDM~Y2):
both converge in as few as 4 iterations.

\begin{figure}[h!]
\begin{minipage}{0.26\linewidth}
\includegraphics[scale=0.20]{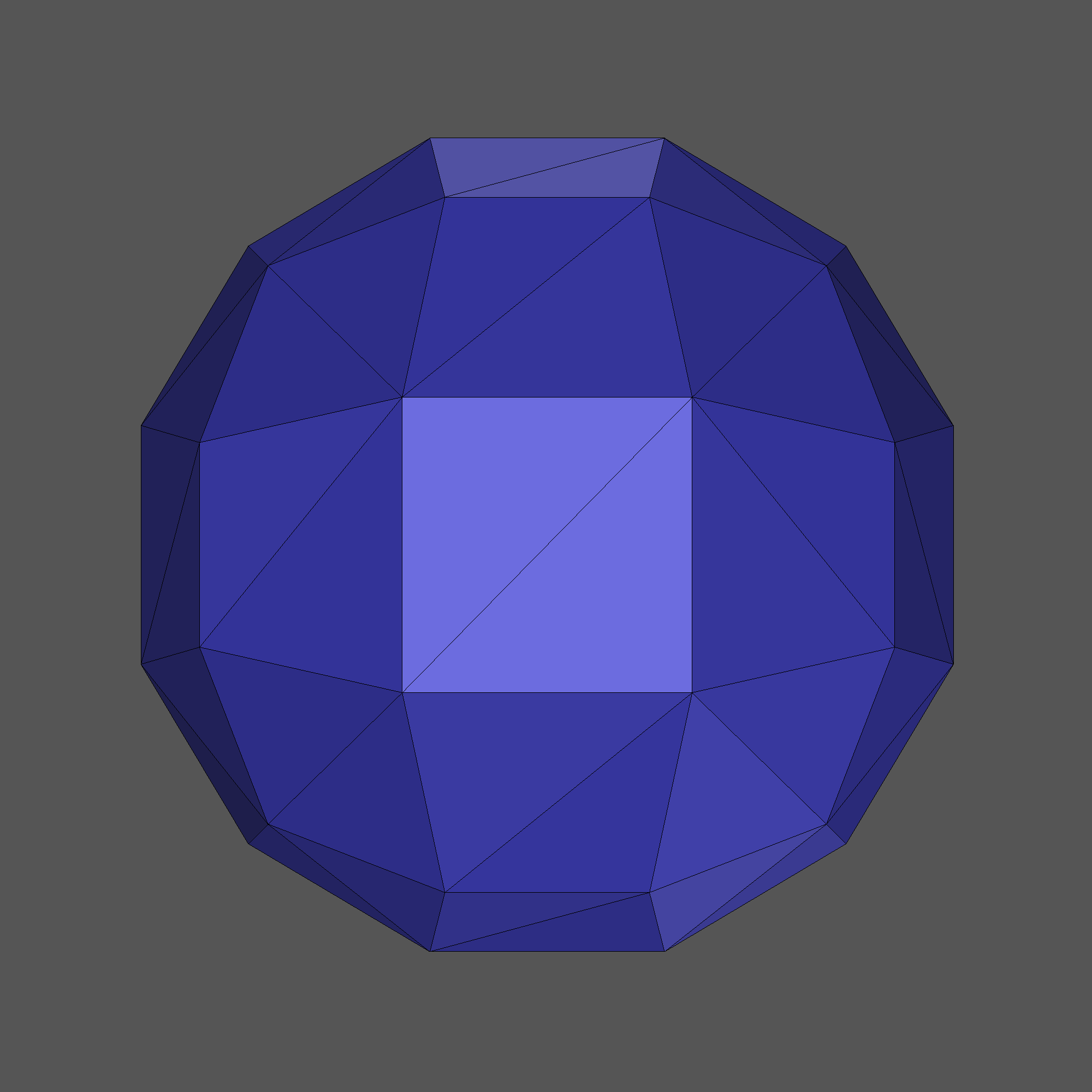}
\end{minipage}
\hfill
\begin{minipage}{0.70\linewidth}
\includegraphics[scale=0.40]{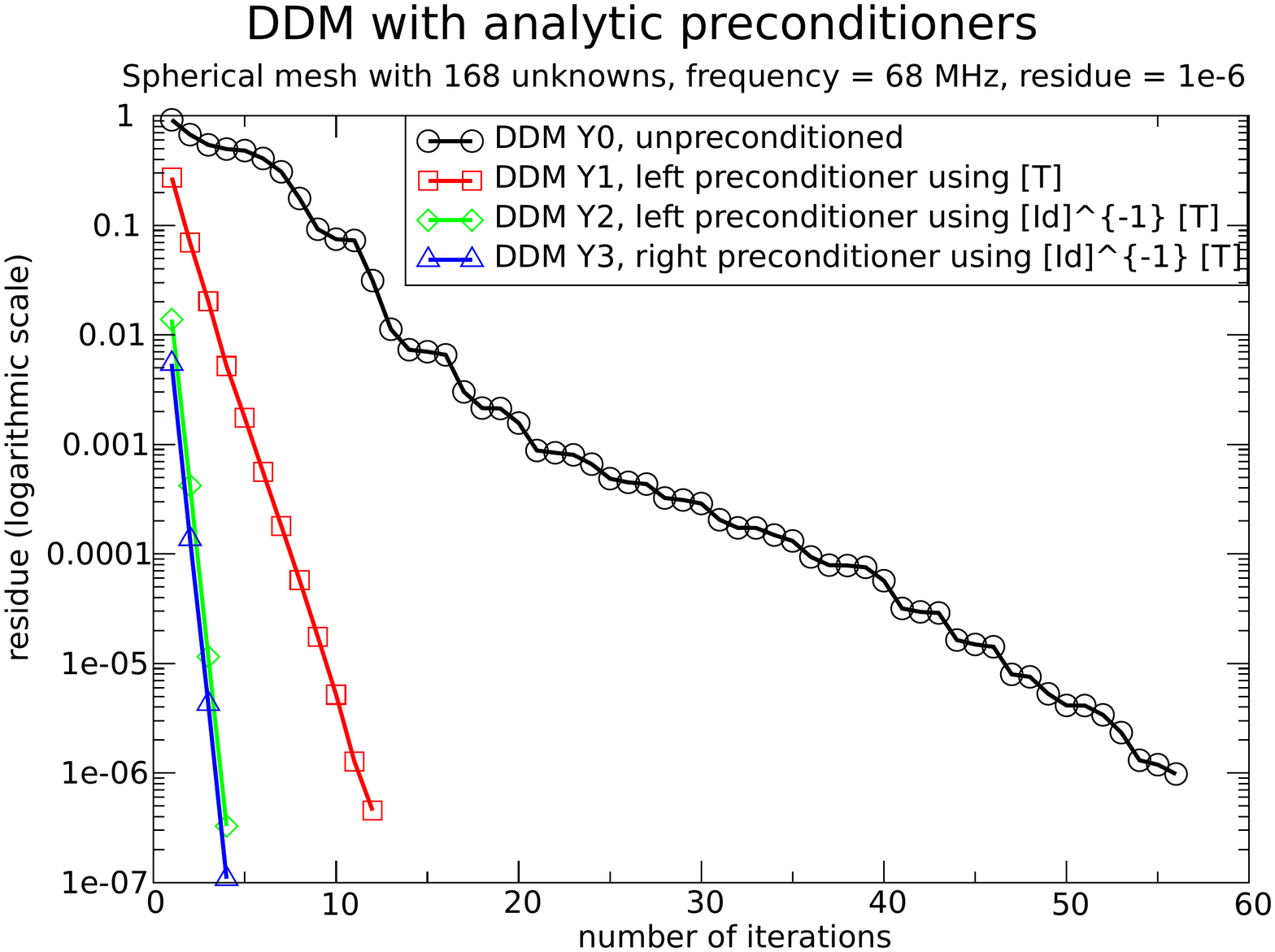}
\end{minipage}
\caption{
Mesh (left) and convergence curves (right) to reach a residue of order $10^{-6}$,
for the artificial sphere at 68 MHz meshed with 168 DoF. }
\label{picture5}
\end{figure}

In the next experiment, there is a scattering object which
contains a cavity. This is the original setting intended for our study.
First, we present the case of an object whose shape is close
to a parallelepipedic box which is open at one of its extremities (\textsc{Fig.} \ref{picture18}),
and therefore exhibits a cavity.
The interface $\Sigma$ of this parallelepipedic box is a flat rectangle and is meshed with 102 DoF.
For a residue of order $10^{-6}$,
DDM~Y1, DDM~Y2 and DDM~Y3 converge respectively 
in 19, 13 and as few as 11 iterations,
whereas the unpreconditioned method has not reached convergence after 1000 iterations.

\newpage
\begin{figure}[h!]
\begin{minipage}{0.26\linewidth}
\includegraphics[scale=0.20]{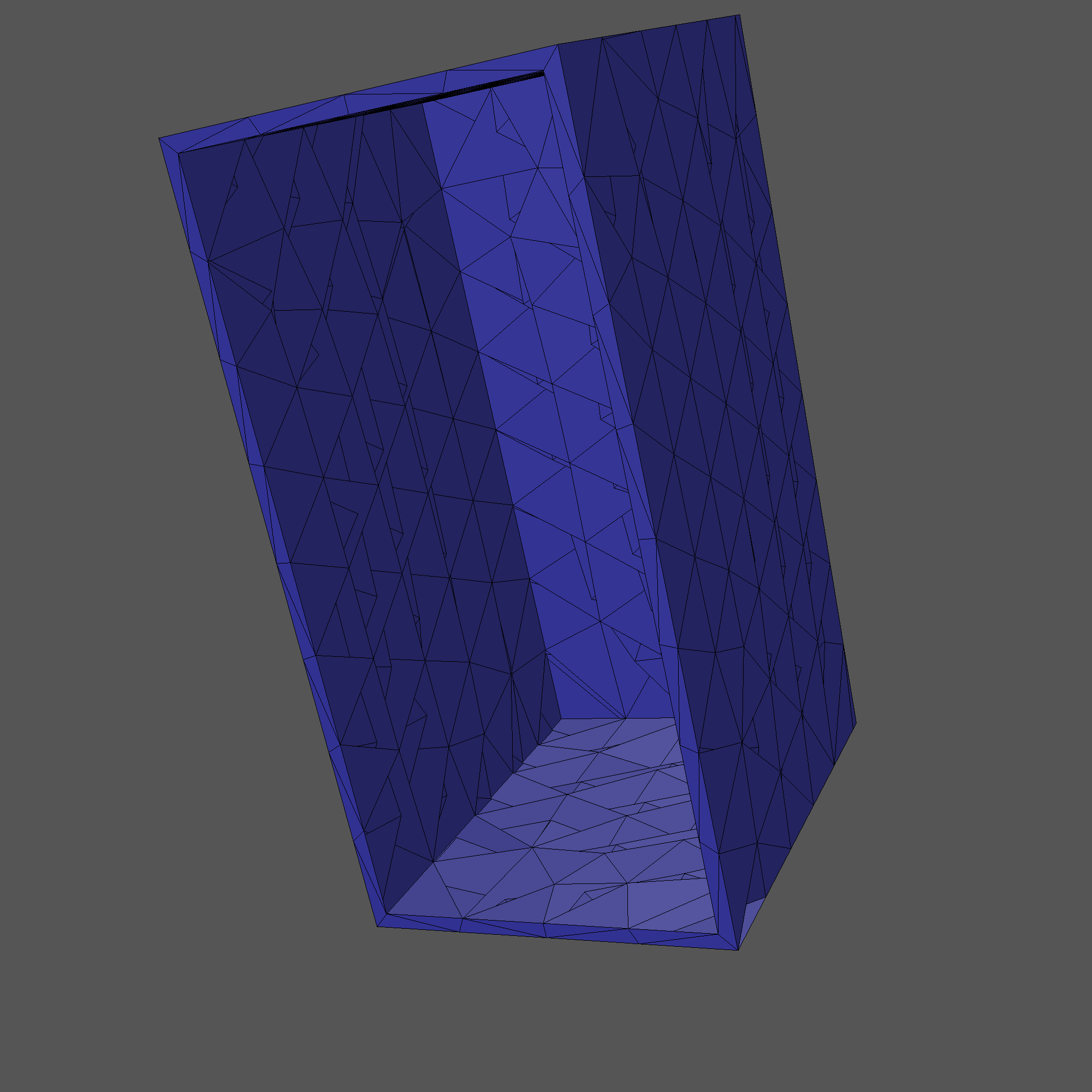}
\end{minipage}
\hfill
\begin{minipage}{0.70\linewidth}
\includegraphics[scale=0.40]{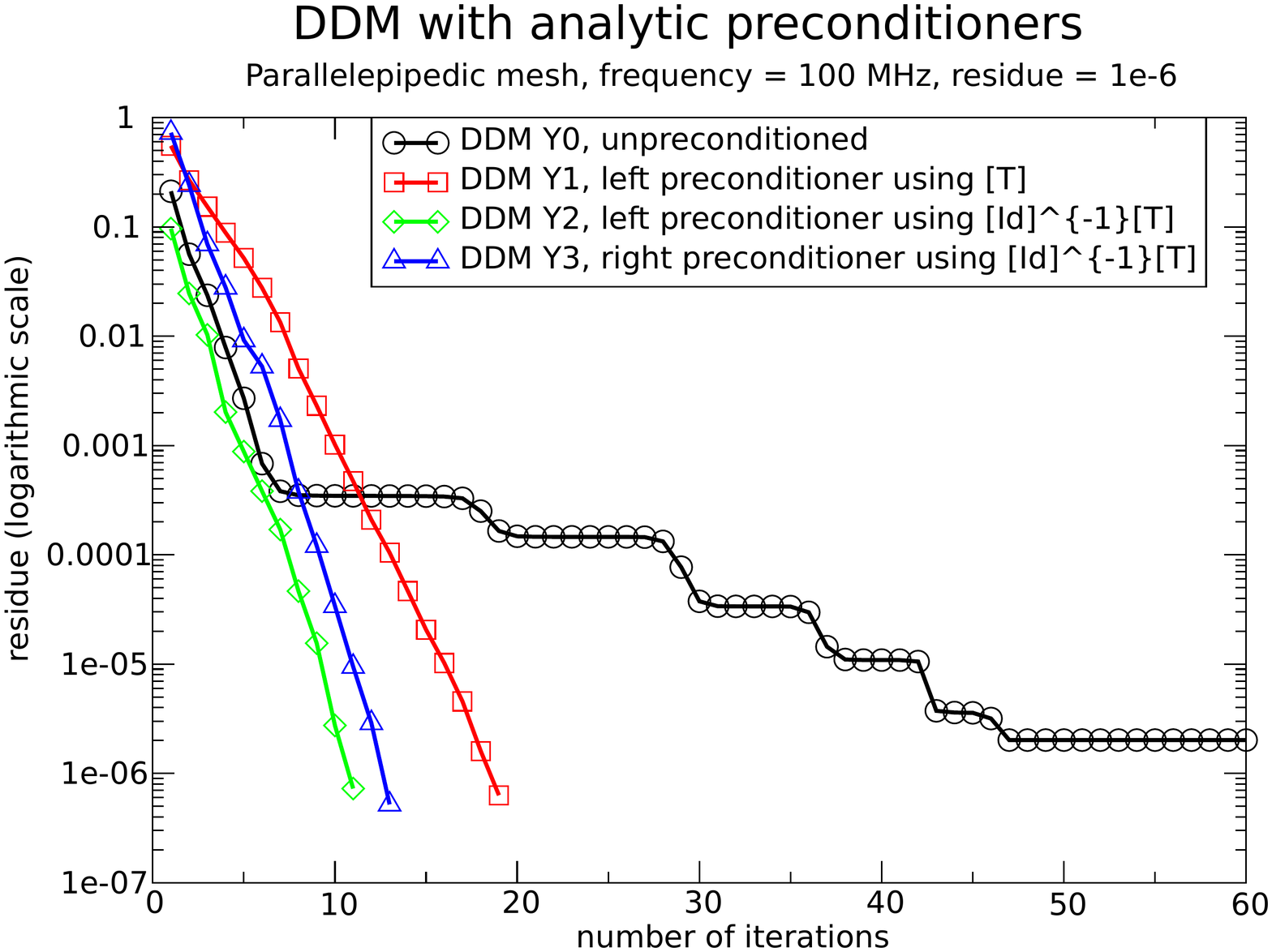}
\end{minipage}
\caption{Mesh (left) and convergence curves (right) to reach a residue of order $10^{-6}$,
for the parallelepipedic box at 100 MHz meshed with 102 DoF on the interface $\Sigma$. }
\label{picture18}
\end{figure}

On \textsc{Fig.} \ref{pictureSER},
we compare the radar cross section (RCS)
obtained by the four methods to the one obtained with
the integral equation EFIE on the global mesh,
without any artificial interface.
Although this case is quite simple, there is no artefact due to the method.

\begin{figure}[h!] \centering
\begin{minipage}[t]{0.5\linewidth}
\includegraphics[scale=0.32]{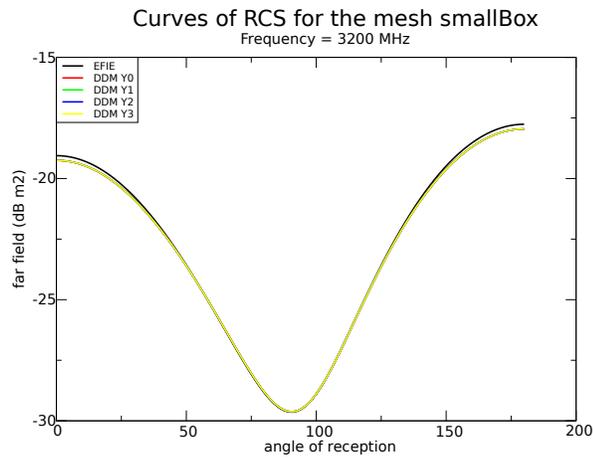}
\end{minipage}
\caption{Curves of RCS for the mesh smallBox at a frequency of 3200 MHz. }
\label{pictureSER}
\end{figure}

\newpage
\subsection{Reliability of the method with respect to the frequency}

Our third experiment illustrates the influence of the frequency increase on the convergence rate,
for the sphere again, but with a finer mesh
of the spherical interface $\Sigma$, of 3072 DoF.
We choose to compare only DDM~Y0 (without preconditioner)
with DDM~Y2 (with the left preconditioner).
The number of iterations to reach a residue of order $10^{-5}$
increases with the frequency for DDM~Y0,
whereas it remains stable (always 4 iterations) for DDM~Y2 (\textsc{Fig.}~\ref{picture6} and \textsc{Tab.}~\ref{picture7}).
Consequently, the convergence rate is not altered by the increase of the frequency,
as illustrated on \textsc{Fig.}~\ref{picture5}, on the right, and on \textsc{Fig.}~\ref{picture6}.

\begin{figure}[h!]
\begin{minipage}{0.26\linewidth}
\includegraphics[scale=0.20]{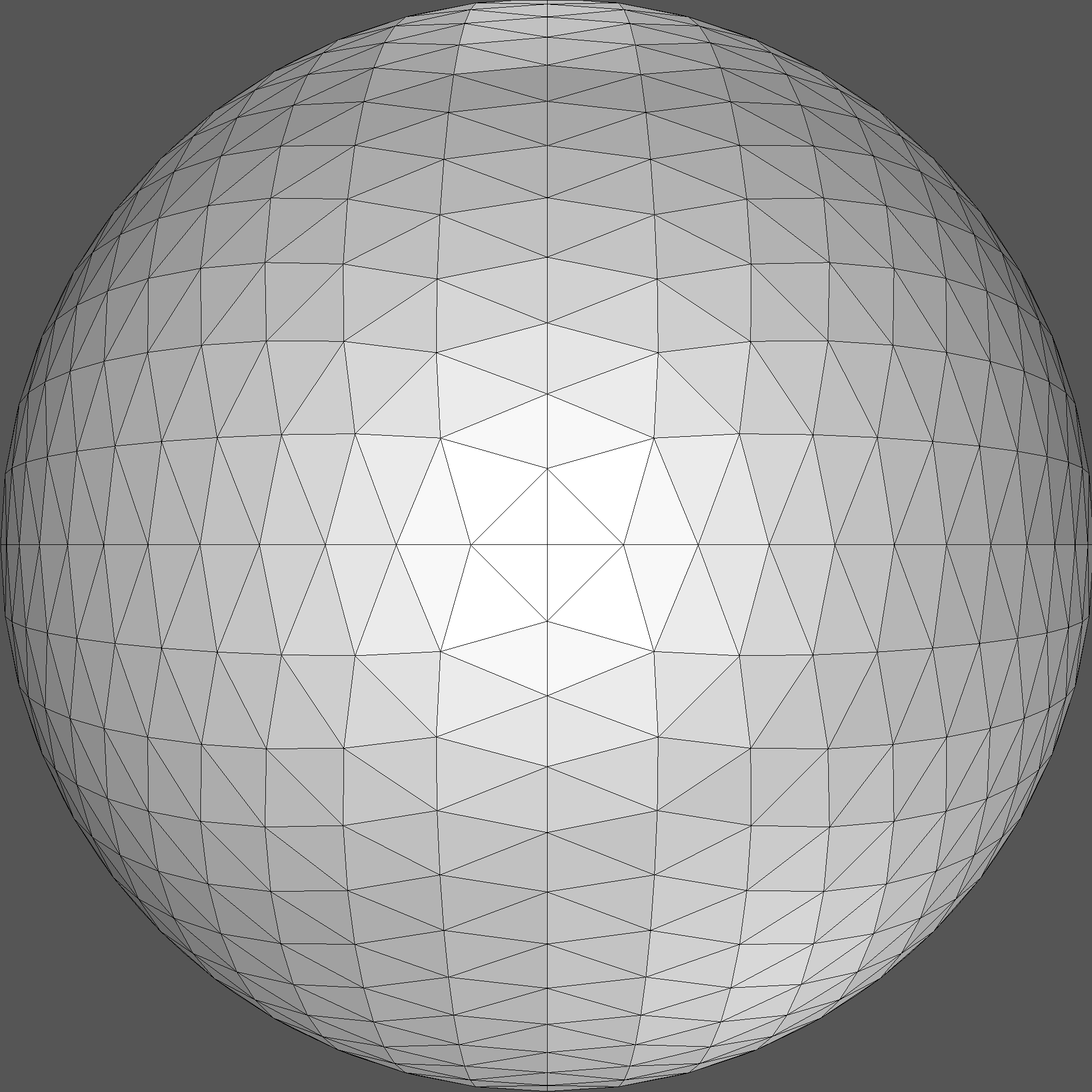} 
\end{minipage}
\hfill
\begin{minipage}{0.70\linewidth}
\includegraphics[scale=0.40]{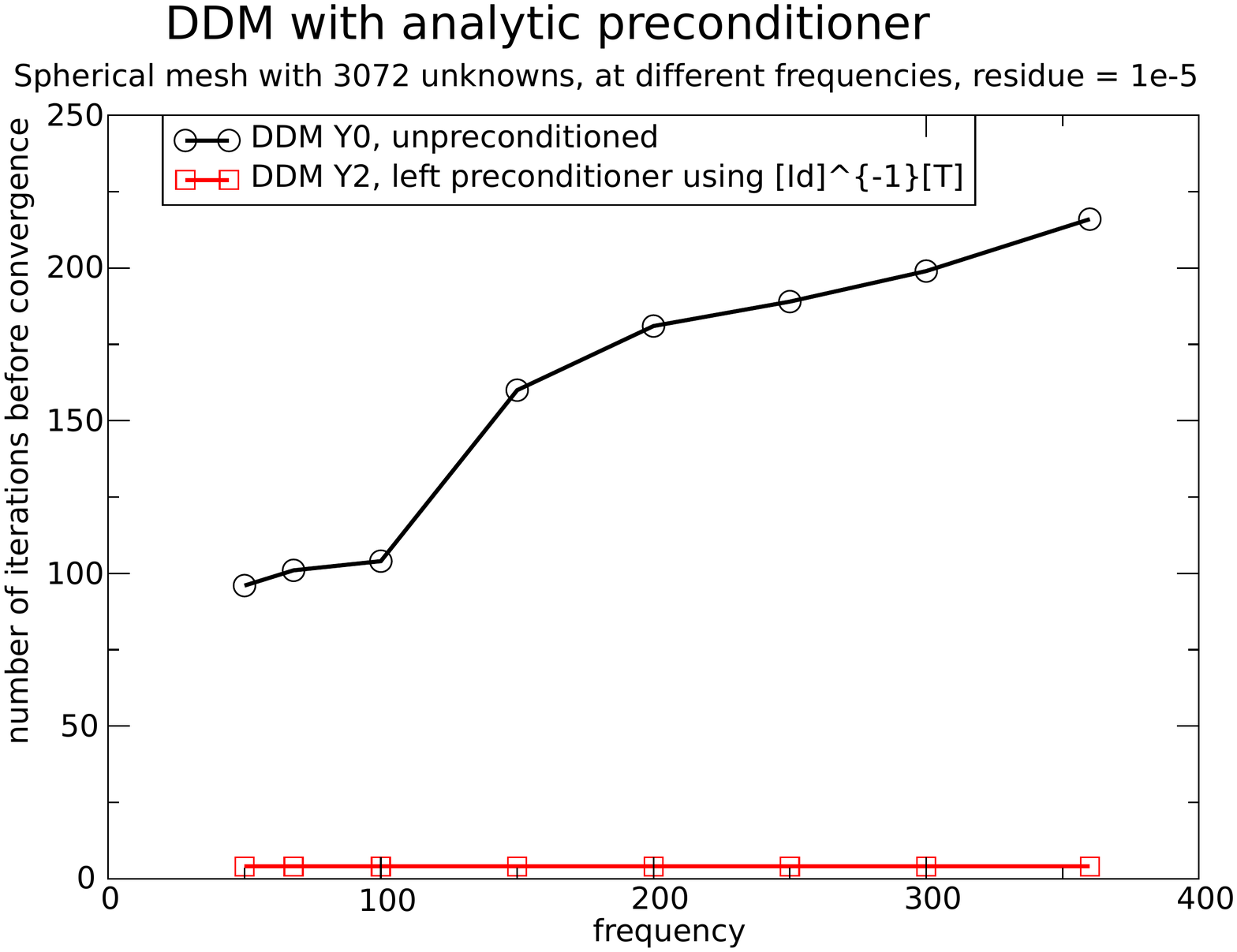}
\end{minipage}
\caption{
Mesh (left) and influence of the frequency increase (right)
on the number of iterations to reach a residue of order $10^{-5}$,
for the artificial sphere meshed with 3072 DoF. }
\label{picture6}
\end{figure}

\begin{table}[h!] \centering
\begin{tabular}{|c|c|c|c|c|c|c|c|c|}
\hline
Frequency (MHz) & 50 & 68 & 100 & 150 & 200 & 250 & 300 & 360 \\
\hline
DDM~Y0 & 96 & 101 & 104 & 160 & 181 & 189 & 199 & 216 \\
\hline
DDM~Y2 & 4 & 4 & 4 & 4 & 4 & 4 & 4 & 4  \\
\hline
\end{tabular} \\
\caption{Iterations count to reach a residue of order $10^{-5}$
depending on the frequency, for the sphere with 3072 DoF.}
\label{picture7}
\end{table}

\newpage
\subsection{Reliability of the method with respect to the number of unknowns}

\subsubsection{Artificial spheres (no real object)}
\;\ \\

We now refine the mesh of the sphere, passing from 3072 DoF to 5292 DoF.
On this spherical mesh, at a frequency of 400 MHz, and to reach a residue of $10^{-4}$,
DDM~Y2 converges in 28 iterations.
The condition number of the linear system has obviously increased
in comparison with the one of the former mesh (with 3072 DoF)
(see \textsc{Tab.}~\ref{pictureTableSp3528}), leading to a smaller convergence rate.

\begin{figure}[h!] \centering
\includegraphics[scale=0.32]{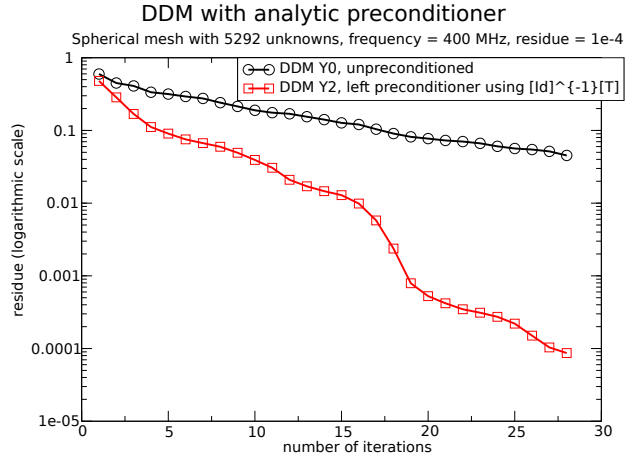}
\caption{Convergence curves to reach a residue of order $10^{-4}$,
for the spherical mesh with 5292 DoF, at a frequency of 400~MHz,
respectively for DDM~Y0 (unpreconditioned)
and for DDM~Y2 (with analytic preconditioner). }
\label{pictureSp3528}
\end{figure}

\begin{table}[h!] \centering
\begin{tabular}{|c|c|c|c|c|c|c|c|}
\hline
Equation & Frequency & Residue & Number of unknowns & Number of iterations  \\
\hline DDM~Y2 & 360 MHz & $10^{-5}$ & 3072 & 4 \\
\hline DDM~Y2 & 400 MHz & $10^{-4}$ & 5292 &  28 \\
\hline
\end{tabular}
\caption{For the algorithm DDM~Y2, comparison between the number of iterations
needed to reach a given residue at a given frequency,
respectively for the spherical mesh with 3072~DoF
and for the spherical mesh with 5292~DoF. }
\label{pictureTableSp3528}
\end{table}

Nevertheless, looking at the convergence curves of the residues 
with respect to the iterations (\textsc{Fig.}~\ref{pictureSp3528}),
we observe that the unpreconditioned DDM~Y0 converges much slower than DDM~Y2.
In particular, after 28 iterations, DDM~Y0 has not reached a residue of $5. 10^{-2}$,
whereas DDM~Y2 has reached a residue of $10^{-4}$.
As a conclusion, this preconditioner remains very efficient for finer geometries.

\newpage
\subsubsection{Hollow spheres (real objects)}
\;\ \\

In this section, we illustrate the behavior of the algorithms (DDM~Y0 to DDM~Y3) when we refine 
the mesh of the scattering object.
In that purpose, we consider a hollow sphere which constitutes the real scattering obstacle.
The sphere is of radius 1 meter and is open for latitudes higher than 45 degrees,
and is discretized with six different meshes of increasing precision
(see \textsc{Fig.}~\ref{pictureHollow35} for the most refined mesh).

The artificial interface needed for the DDM algorithm
is chosen to be the missing cap of the sphere.
Therefore, the interior and exterior problems consist in solving Maxwell equations inside and outside the sphere, respectively.
They exchange data on the cap while, on the rest of the sphere, we have a Dirichlet type boundary condition.
We give in \textsc{Tab.}~\ref{pictureTableMeshHollow}
the number of unknowns respectively on the interface
and on the whole sphere, for each considered mesh.
Due to the size of the meshes, and contrarily to what has been done so far,
we use a fast multipole method (FMM) to compress all involved linear systems.

\begin{table}[h!] \centering
\begin{tabular}{|c|c|c|c|c|c|c|c|}
\hline
Name of the mesh & Number of DoF & Number of DoF on the spherical mesh \\
& on the interface (cap) & of both subdomains \\
\hline hollow12 & 888 & 5184 \\
\hline hollow15 & 1380 & 8100 \\
\hline hollow20 & 2440 & 14400 \\
\hline hollow25 & 3800 & 22500 \\
\hline hollow30 & 5460 & 32400 \\
\hline hollow35 & 7420 & 44100 \\
\hline
\end{tabular}
\caption{Meshes of the considered hollow spheres. }
\label{pictureTableMeshHollow}
\end{table}

\newpage

\begin{figure}[h!] \centering
\includegraphics[scale=0.34]{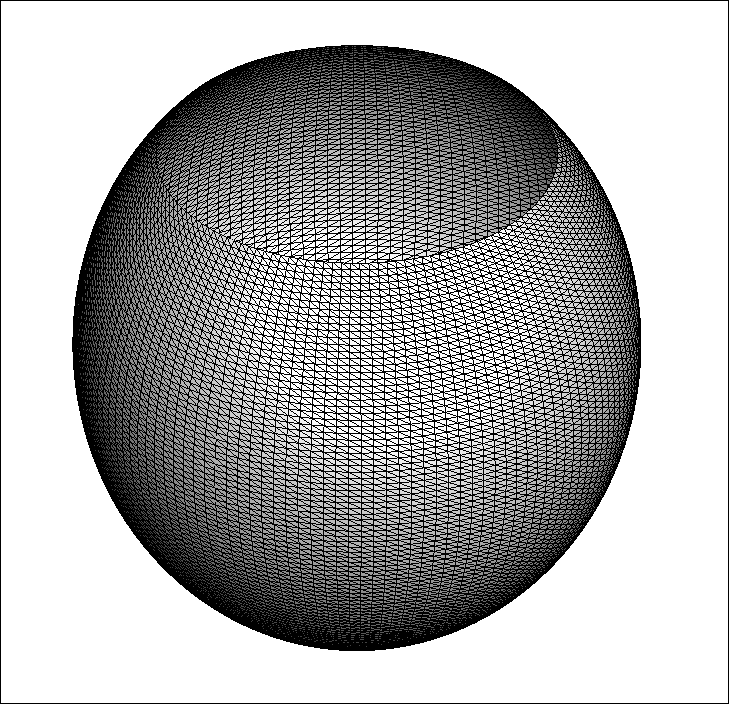}
\caption{Mesh (hollow35) of the hollow sphere of radius 1m.
The interface (not represented) possesses 7420 DoF while the whole sphere has 44100 DoF. }
\label{pictureHollow35}
\end{figure}

\begin{figure}[h!] \centering
\includegraphics[scale=0.42]{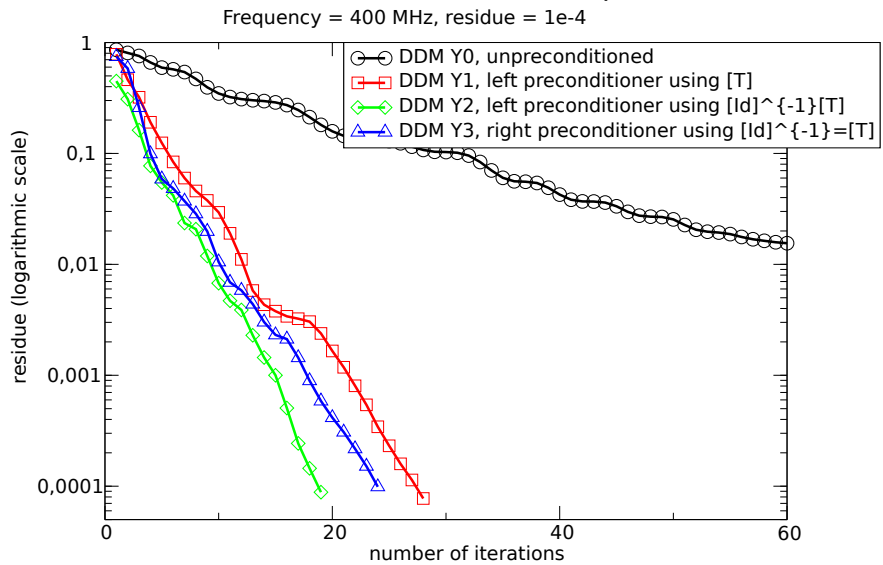}
\caption{Convergence curves to reach a residue of order $10^{-4}$,
for the mesh hollow35 of the hollow sphere, at a frequency of 400~MHz,
respectively for DDM~Y0 (unpreconditioned) and DDM~Y1, DDM~Y2, DDM~Y3 (with analytic preconditioners). }
\label{pictureCurveHollow35}
\end{figure}

\newpage

\begin{figure}[h!] \centering
\includegraphics[scale=0.45]{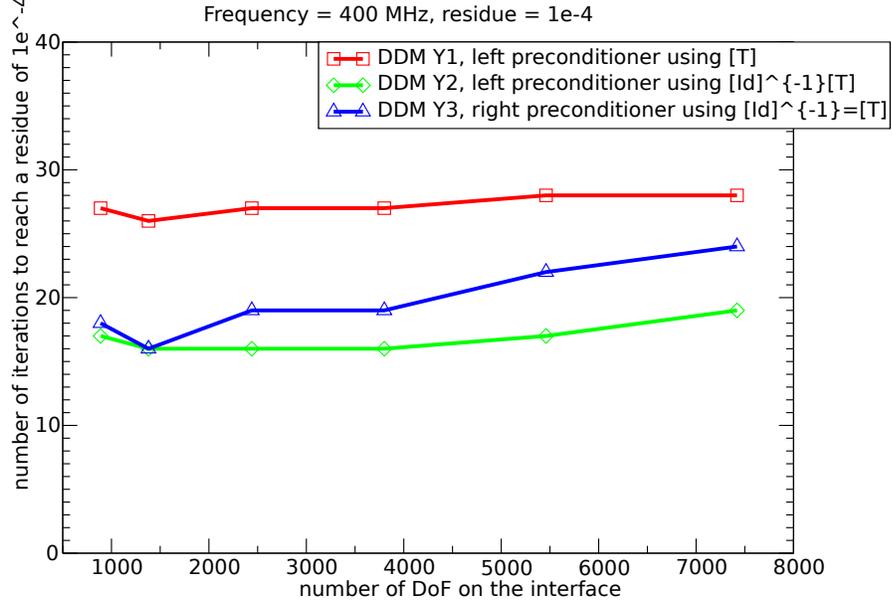}
\caption{
Comparison of the number of iterations for the preconditioned equations DDM~Y1, DDM~Y2, DDM~Y3,
to reach a residue of $10^{-4}$, at a constant frequency of 400~MHz.
For each curve, each point corresponds to one of the different meshes
that we have considered, namely with 888, 1380, 2440, 3800, 5460 and 7420 DoF
on the interface. }
\label{pictureCompIter}
\end{figure}

We show in \textsc{Fig.}~\ref{pictureCurveHollow35} the convergence rates
for the four methods DDM~Y0 to DDM~Y3 in the finest case (hollow35),
for a frequency of 400~MHz.
Once again, the unpreconditioned DDM~Y0 converges much slower than the three other
preconditioned equations (DDM~Y1, DDM~Y2, DDM~Y3).
For instance, DDM~Y1 (resp. DDM~Y2, DDM~Y3) reaches a residue of $10^{-4}$ in 28 iterations (resp. 19, 24 iterations),
whereas DDM~Y0 has not yet reached a residue of $10^{-2}$ in 60 iterations.
The explicit numbers of iterations for all meshes
are provided in \textsc{Tab.}~\ref{pictureTableConvergenceHollow}.

\begin{table}[h!] \centering
\begin{tabular}{|l|c|c|c|c|c|c|c|}
\hline
Equation & DDM~Y0 & DDM~Y1 & DDM~Y2 & DDM~Y3 \\
Mesh & & & & \\
\hline hollow12 & 152 & 27 & 17 & 18 \\
\hline hollow15 & 186 & 26 & 16 & 16 \\
\hline hollow20 & $> 60$ & 27 & 16 & 19 \\
\hline hollow25 & $> 60$ & 27 & 16 & 19 \\
\hline hollow30 & $> 60$ & 28 & 17 & 22 \\
\hline hollow35 & $> 60$ & 28  & 19  & 24 \\
\hline
\end{tabular}
\caption{
Number of iterations to reach a residue of order $10^{-4}$, at a constant frequency of 400~MHz,
for the six meshes of the hollow sphere,
for DDM~Y0 (unpreconditioned) and DDM~Y1, DDM~Y2, DDM~Y3 (with analytic preconditioners). }
\label{pictureTableConvergenceHollow}
\end{table}

Increasing the frequency of the problem to 1~GHz,
only for the finest mesh (hollow35),
does not deteriorate the method that much.
Indeed, we show in \textsc{Fig.}~\ref{pictureCurveHollow35_f1G}
and \textsc{Tab.}~\ref{pictureTableConvergenceHollow35_f1G}
that the three preconditioned methods DDM~Y1, DDM~Y2, DDM~Y3
remain very competitive in comparison with DDM~Y0.

\begin{figure}[h!] \centering
\includegraphics[scale=0.42]{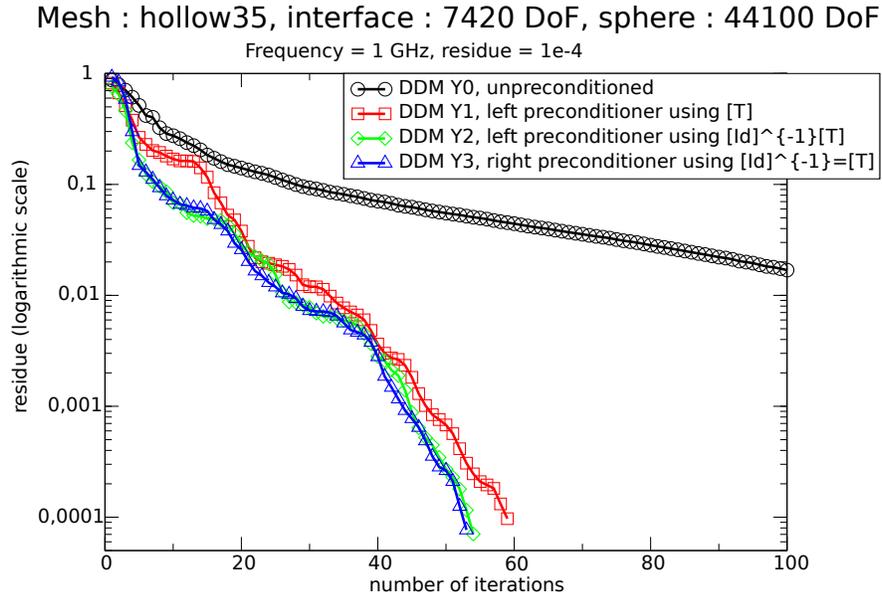}
\caption{Convergence curves to reach a residue of order $10^{-4}$,
for the mesh hollow35 of the hollow sphere, whose interface is meshed with 7420 DoF, at a frequency of 1~GHz,
respectively for DDM~Y0 (unpreconditioned) and DDM~Y1, DDM~Y2, DDM~Y3 (with analytic preconditioners). }
\label{pictureCurveHollow35_f1G}
\end{figure}

\begin{table}[h!] \centering
\begin{tabular}{|l|c|c|c|c|c|c|c|}
\hline
Equation & DDM~Y0 & DDM~Y1 & DDM~Y2 & DDM~Y3 \\
Mesh & & & & \\
\hline hollow35 & $> 100$ & 59  & 54  & 53 \\
\hline
\end{tabular}
\caption{
Number of iterations to reach a residue of order $10^{-4}$, at a frequency of 1~GHz,
for the mesh hollow35, whose interface is meshed with 7420 DoF,
for DDM~Y0 (unpreconditioned) and DDM~Y1, DDM~Y2, DDM~Y3 (with analytic preconditioners). }
\label{pictureTableConvergenceHollow35_f1G}
\end{table}

\newpage
\section{Conclusion}

We have proposed a domain decomposition method associated with an efficient preconditioner,
based on the restriction of the single layer operator
on the interface between the subdomains.
In each subdomain, the EFIE is solved at each iteration.
The numerical results illustrate the very good behavior of the resulting preconditioned algorithm,
which converges much faster than without preconditioning. \\

Nevertheless, although the proposed method seems very encouraging, several difficulties need
still to be overcome in order to make the method usable in real applications.
First, the present formulation is restricted to the case where the EFIE is solved in each subdomain.
Clearly, there is an obvious obstruction for resonant frequencies. In order to circumvent this issue,
we have to generalize the approach for other formulations (e.g. CFIE, or the recent very efficient
GCSIE methods \cite{Alouges07}, \cite{AlougesWaves07}). \\

Another improvement direction consists in changing the coupling condition on the surface $\Sigma$ 
between the subdomains. For instance, when one takes impedant coupling boundary conditions, it 
is well known that the underlying problems are well-posed for any frequency \cite{CakoniColtonMonk04}.
Again, the GCSIE formalism, originally developed for metallic problems,
has recently been extended to impedant ones in \cite{LevadouxMillotPernet} and \cite{Pernet},
and could prove to be very efficient. \\

We plan to investigate those issues and even combinations of them in the foreseeing future.

\subsection*{Acknowledgements}
We would like to address special thanks to Jean-Marie Mirebeau for helpful suggestions.


\end{document}